\long\def\delete#1{}
\definecolor{Blue}{rgb}{0,0,1}
\definecolor{Red}{rgb}{1,0,0}
\definecolor{DarkGreen}{rgb}{0,0.6,0}
\definecolor{DarkYellow}{rgb}{1,1,0.2}
\definecolor{DarkPurple}{rgb}{.6,0,1}
\def\ma{\mathcal{A}}
\def\mb{\mathcal{B}}
\def\mc{\mathcal{C}}
\def\mf{\mathcal{F}}
\def\mg{\mathcal{G}}
\def\mh{\mathcal{H}}
\def\ml{\mathcal{L}}
\def\mm{\mathcal{M}}
\def\mn{\mathcal{N}}
\def\mr{\mathcal{R}}
\def\mt{\mathcal{T}}
\def\bs{\setminus}
\def\ge{\geqslant}
\def\le{\leqslant}
\def\ro{\romannumeral}
\newtheorem{thm}{Theorem}[section]
\newtheorem{lem}[thm]{Lemma}
\newtheorem{ex}{Construction}
\newtheorem{cl}{Claim}
\newtheorem{as1}{Assumption}
\begin{document}
	\setcounter{page}{1}
	\renewcommand{\thefootnote}{}
	\newcommand{\remark}{\vspace{2ex}\noindent{\bf Remark.\quad}}
	\renewcommand{\abovewithdelims}[2]{%
		\genfrac{[}{]}{0pt}{}{#1}{#2}}

	
	\def\qed{\hfill$\Box$\vspace{11pt}}
	
\title{Large non-trivial $t$-intersecting families for signed sets}
\author{Tian Yao\thanks{E-mail: \texttt{yaotian@mail.bnu.edu.cn}}}
\author{Benjian Lv\thanks{E-mail: \texttt{bjlv@bnu.edu.cn}}}
\author{Kaishun Wang\thanks{Corresponding author. E-mail: \texttt{wangks@bnu.edu.cn}}}
\affil{Laboratory of Mathematics and Complex Systems (Ministry of Education), School of
	Mathematical Sciences, Beijing Normal University, Beijing 100875, China}
\date{}

\maketitle
\begin{abstract}
	For positive integers $n,r,k$ with $n\ge r$ and $k\ge2$, a set $\{(x_1,y_1),(x_2,y_2),\dots,(x_r,y_r)\}$ is called a $k$-signed $r$-set on $[n]$ if $x_1,\dots,x_r$ are distinct elements of $[n]$ and $y_1\dots,y_r\in[k]$. 
	We say a $t$-intersecting family consisting of $k$-signed $r$-sets on $[n]$ is trivial if each member of this family contains a fixed $k$-signed $t$-set. In this paper, we determine the structure of large maximal non-trivial $t$-intersecting families. In particular, we characterize the non-trivial $t$-intersecting families with maximum size for $t\ge2$, extending a Hilton-Milner-type result for signed sets given by Borg.
	
			\vspace{2mm}
	
	\noindent{\bf Key words:}\ \ Extremal set theory, signed sets, non-trivial $t$-intersecting families
	
	\
	
	\noindent{\bf AMS classification:} \   05D05
\end{abstract}
\section{Introduction}

Let $n$, $r$ and $t$ be positive integers with $n\ge r\ge t$. For an $n$-set $X$, let $2^X$ and $\binom{X}{r}$ denote the collection of all subsets and the set of all $r$-subsets of $X$, respectively. 
A family $\mf\subset2^X$ is called $t$-\emph{intersecting} if $|F\cap F'|\ge t$ for any $F,F'\in\mf$. 
Moreover, we say $\mf$ is \emph{trivial} if each member of $\mf$ contains a fixed $t$-subset of $X$. 

The famous Erd\H{o}s-Ko-Rado Theorem \cite{EKR,Fn,Wn} states that the largest $t$-intersecting subfamilies of $\binom{X}{r}$ are trivial if $n>(t+1)(r-t+1)$. 
In \cite{Fn}, Frankl conjectured the structure of the maximum sized $t$-intersecting subfamilies of $\binom{X}{r}$ for all $n,r$ and $t$.  
Frankl's conjecture was partially settled by Frankl and F\"{u}redi \cite{FF}, and was completely confirmed by Ahlswede and Khachatrian \cite{SEKRT}.

The maximum sized non-trivial $t$-intersecting subfamilies of $\binom{X}{r}$ have been characterized. Hilton and Milner \cite{SHM} gave the first result for the structure of such families when $t=1$, which was also proved by Frankl and F\"{u}redi \cite{FFF} via the shifting technique. 
In \cite{SHMT1}, Frankl proved the corresponding result  for all $t$ and sufficiently large $n$. 
The complete result was given by Ahlswede and Khachatrian \cite{SHMT}. 
Besides, some other maximal non-trivial $t$-intersecting subfamilies of $\binom{X}{r}$ have been characterized. 
Han and  Kohayakawa \cite{LSET3} described the structure of the second largest maximal non-trivial $1$-intersecting familes with $n>2r\ge6$. 
Kostochka and Mubayi \cite{LSET1} determined the structure of $1$-intersecting families with sizes quit a bit smaller than $\binom{n-1}{r-1}$ for large $n$. 
Recently, Cao et al. \cite{LSET2} gave the structure of large maximal non-trivial $t$-intersecting families for all $t$ and large $n$.

The $t$-intersection problem has been studied on some other mathematical objects, for example, signed sets. 
Write $[n]=\{1,2,\dots,n\}$. For $k\ge2$,  
each element of 
$$\ml_{n,r,k}:=\left\{\{(x_1,y_1),\dots,(x_r,y_r)\}:\{x_1,\dots,x_r\}\in\binom{[n]}{r},y_1,\dots,y_r\in[k]\right\}$$
is called a $k$-\emph{signed} $r$-\emph{set on} $[n]$. 
For two subfamilies $\mf$ and $\mg$ of $\ml_{n,r,k}$, if there exists a bijection $\sigma$ from $[n]\times[k]$ to itself such that $\mg=\{\sigma(F): F\in\mf\}$, then we say $\mf$ is \emph{isomorphic} to $\mg$, and denote this by $\mf\cong\mg$.

The Erd\H{o}s-Ko-Rado Theorem for signed sets states that if $n$ or $k$ is sufficiently large, then the maximum sized $t$-intersecting subfamilies of $\ml_{n,r,k}$ are trivial, see \cite{HEKR1,HEKR2,HEKR3} for $r=n$ and \cite{SSET2,SSET1} for $r<n$.

In this paper, we study the structure of maximal non-trivial $t$-intersecting subfamilies of $\ml_{n,r,k}$. 
To present our main results, we introduce two constructions of non-trivial $t$-intersecting subfamilies of $\ml_{n,r,k}$. For each $d\in[n]$, write $M_d=\{(1,1),(2,1),\dots,(d,1)\}$.

\begin{ex}
	Suppose that $n,r,k,\ell$ and $t$ are positive integers with $2\le k$, $t+1\le r\le n$ and $t+2\le \ell\le\min\{r+1,n\}$. Let $\mh_1(n,r,k,\ell,t)$ be the set of all elements $F$ of $\ml_{n,r,k}$ such that
	\begin{itemize}
		\item $M_t\subset F$ and $|F\cap M_\ell|\ge t+1$, or
		\item $M_t\not\subset F$ and $|F\cap M_\ell|=\ell-1$.
	\end{itemize}
\end{ex}
\begin{ex}
	Suppose that $n,r,k,c$ and $t$ are positive integers with $2\le k$, $t+2\le r\le n$ and $r+2\le c\le\min\{2r-t,n\}$. Let $\mh_2(n,r,k,c,t)$  be the set of all elements $F$ of $\ml_{n,r,k}$ such that
	\begin{itemize}
		\item $M_t\subset F$ and $|F\cap M_r|\ge t+1$, or
		\item $F\cap M_r=M_t$ and $M_c\bs M_r\subset F$, or
		\item $M_t\not\subset F$, $|F\cap M_r|=r-1$ and $|F\cap(M_c\bs M_r)|=1$.
	\end{itemize}
\end{ex}

For each $d\in[n]$, write
\begin{equation}\label{f}
	f(n,r,k,d,t)=(d-t)\binom{n-t-1}{r-t-1}k^{r-t-1}-\binom{d-t}{2}\binom{n-t-2}{r-t-2}k^{r-t-2},
\end{equation}
\begin{equation}\label{g}
	g(n,r,t)=\dfrac{(r-t+3)(r-t-1)}{n-t-1}\cdot\max\left\{\binom{t+2}{2},\dfrac{r-t+1}{2}\right\}.
\end{equation}
One of our main results describes the structure of maximal non-trivial $t$-intersecting subfamilies of $\ml_{n,r,k}$ with sizes no less than $f(n,r,k,r,t)$.

\begin{thm}\label{T1}
	Let $n,r,k$ and $t$ be positive integers with  $n\ge t+2$, $n\ge r\ge t+1$ and $k\ge\max\{2,g(n,r,t)\}$. 
	Suppose that $\mf$ is a maximal non-trivial $t$-intersecting subfamily of $\ml_{n,r,k}$.  
	Then $|\mf|\ge f(n,r,k,r,t)$ if and only if one of the following holds.
	\begin{itemize}
		\item[\rm{(\ro1)}] $r\ge t+2$ and $\mf\cong\mh_1(n,r,k,m,t)$ for some $m\in\{r,\min\{r+1,n\}\}$.
		\item[\rm{(\ro2)}] $n\ge r+2\ge t+4$ and $\mf\cong\mh_2(n,r,k,c,t)$ for some $c\in\{r+2,\dots,\min\{2r-t,n\}\}$.
		\item[\rm{(\ro3)}] $r\le2t+2$, $r\neq t+2$ and $\mf\cong\mh_1(n,r,k,t+2,t)$.
	\end{itemize}
\end{thm}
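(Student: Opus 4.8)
The plan is to prove the two implications separately, with nearly all the work in the necessity (``only if'') direction.

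For sufficiency I would verify directly that each family $\mh_1(n,r,k,\ell,t)$ and $\mh_2(n,r,k,c,t)$ is a maximal non-trivial $t$-intersecting subfamily of $\ml_{n,r,k}$, and then estimate its size against $f(n,r,k,r,t)$. The $t$-intersecting property reduces to a finite check across the defining clauses: two members each containing $M_t$ share those $t$ pairs outright, and every remaining cross-case is forced to overlap in at least $t$ pairs by the imposed conditions on $|F\cap M_\ell|$, $|F\cap M_r|$, and $|F\cap(M_c\bs M_r)|$. Non-triviality is witnessed by exhibiting, for each $t$-set $T$, a member avoiding $T$, while maximality follows by checking that any $F\in\ml_{n,r,k}$ outside the family misses the required overlap with some existing member. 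The cardinalities are computed by inclusion--exclusion over the first coordinates in the relevant core. For the members containing $M_t$ the two leading terms reproduce $f(n,r,k,\ell,t)$, whereas the members avoiding $M_t$ contribute a lower-order correction when the core size is near $r$ but a comparable term when the core is minimal; this is exactly what lets $\mh_1(n,r,k,t+2,t)$ clear the threshold in the regime $r\le 2t+2$ of (\ro3). Since $f(n,r,k,d,t)$ is increasing in $d$ throughout the admissible range (as $k$ is large), the sizes meet or exceed $f(n,r,k,r,t)$ precisely for the parameters listed in (\ro1)--(\ro3).

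For necessity, let $\mf$ be maximal non-trivial $t$-intersecting with $|\mf|\ge f(n,r,k,r,t)=\Theta(k^{r-t-1})$. Fixing any $F_0\in\mf$, each other member meets $F_0$ in at least $t$ pairs, so some $k$-signed $t$-set $T\subseteq F_0$ lies in at least $(|\mf|-1)/\binom{r}{t}$ members. Choosing $T$ to maximize the star $\mf_T=\{F\in\mf:T\subseteq F\}$ and applying an isomorphism, I would normalize $T=M_t$. Non-triviality yields a nonempty set $\mg=\mf\bs\mf_{M_t}$, and each $G\in\mg$ satisfies $|G\cap M_t|\le t-1$ yet must $t$-intersect every $F\in\mf_{M_t}$. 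Writing $F=M_t\cup R_F$, this forces $|G\cap R_F|\ge t-|G\cap M_t|\ge 1$ for all such $F$; that is, the pairs of $G$ lying outside $M_t$ must hit the remainder of every member of the star. Because $\mf_{M_t}$ is large, these remainders range widely, so the hitting requirement confines the relevant first coordinates to a bounded set, which I would identify as the core $M_\ell$ in the $\mh_1$ case or $M_r$ together with the auxiliary block $M_c\bs M_r$ in the $\mh_2$ case.

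The heart of the argument is to show that this core is essentially unique and to read off which construction arises. A refined count shows that if the members of $\mg$ forced two genuinely different cores, then the remainders $R_F$ would have to hit both, and the resulting shortfall in $|\mf_{M_t}|$ would push $|\mf|$ below $f(n,r,k,r,t)$; this is the step where the hypothesis $k\ge g(n,r,t)$ is used decisively. Once a single core is fixed, maximality forces $\mf$ to contain \emph{every} set compatible with it, so $\mf$ is determined by the core and by the common behaviour of $\mg$. Either each $G$ is forced to contain all but one pair of a size-$\ell$ core (giving $\mh_1$), or each $G$ misses exactly one pair of $M_r$ and picks up exactly one pair from the extended block $M_c\bs M_r$ (giving $\mh_2$); matching the admissible core sizes then yields $m\in\{r,\min\{r+1,n\}\}$ in (\ro1) and $c\in\{r+2,\dots,\min\{2r-t,n\}\}$ in (\ro2).

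The main obstacle I anticipate is precisely this uniqueness-of-core step and its required precision. Distinguishing the $\mh_1$- and $\mh_2$-type structures hinges on the second-order term $\binom{d-t}{2}\binom{n-t-2}{r-t-2}k^{r-t-2}$ of $f$, so the separating estimates must be carried to that order; this is what forces the explicit threshold $g(n,r,t)$, and what isolates the degenerate regimes $r=t+2$ (excluded from (\ro1) and (\ro2)) and $r\le 2t+2$ treated separately in (\ro3), where the core is pinned at its minimal size $\ell=t+2$.
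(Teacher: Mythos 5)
Your proposal has the right general shape (decompose around a popular $t$-set, identify a ``core'', compare sizes), and your sufficiency direction is essentially routine bookkeeping that would go through. But the necessity direction, which is where the theorem lives, has two genuine gaps. First, the structural heart --- showing that the core exists, is unique, and can only be of the two types $\mh_1$ (all but one element of a size-$\ell$ core) or $\mh_2$ ($M_r$ plus a block $M_c\bs M_r$), and that intermediate cores $t+2<\ell<r$ fall below the threshold while $\ell\in\{r,\min\{r+1,n\}\}$ and, when $r\le 2t+2$, $\ell=t+2$ survive --- is exactly what you defer to ``a refined count shows\dots'' and yourself flag as the anticipated obstacle. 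This is not a detail; it is the content of the paper's Lemmas 2.1--2.3, which are obtained by a two-level covering-number analysis (first $\tau_t(\mf)$, then the covering number $\tau_t(\mt)$ of the family $\mt$ of minimum $t$-covers), together with the counting Lemmas 2.5--2.6. Second, your framework never treats families with $\tau_t(\mf)\ge t+2$, i.e.\ families admitting no $(t+1)$-element $t$-cover at all: for these there is no bounded core confining the members of $\mg$, and one needs a separate argument (the paper's Claim 1 in Section 3, an iterated application of Lemma 2.4) to show such families are smaller than $f(n,r,k,r,t)$. Your sketch silently assumes this case away.

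There is also a quantitative reason your entry point would fail under the stated hypotheses. Pigeonhole gives only $|\mf_{M_t}|\ge(|\mf|-1)/\binom{r}{t}$, a loss of $\binom{r}{t}$ that the theorem cannot absorb: the assumption is merely $k\ge\max\{2,g(n,r,t)\}$, and since $g(n,r,t)=\frac{(r-t+3)(r-t-1)}{n-t-1}\max\bigl\{\binom{t+2}{2},\frac{r-t+1}{2}\bigr\}$ tends to $0$ as $n\to\infty$, the case $k=2$ is allowed; the only guaranteed largeness is that $(n-t-1)k$ exceeds a quantity polynomial in $r-t$ and $t$, whereas $\binom{r}{t}$ is in general exponentially larger. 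After this loss, ``$\mf_{M_t}$ is large'' no longer forces its remainders to ``range widely'': a family of size $f(n,r,k,r,t)/\binom{r}{t}$ fits comfortably inside the star of a single $(t+1)$-set $M_t\cup\{(x_0,y_0)\}$, in which case every $G\in\mg$ need only contain the one pair $(x_0,y_0)$ and no confinement to a bounded core follows. The paper's machinery (Lemma 2.4 iterated in Claim 1, and Lemmas 2.5--2.6) is engineered precisely so that each loss factor of the form $(r-t+1)^{z-t}\binom{z}{t}$ is beaten by a matching product of factors $(n-i)k/(r-t+1)(r-i)$; your outline has no analogue of this bookkeeping, and it is the bookkeeping, not the outline, that constitutes the proof. (A small misreading as well: the exclusion $r\neq t+2$ appears in clause (\ro3), not in (\ro1) and (\ro2), which both permit $r=t+2$.)
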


In \cite{SSETHM1}, Borg determined the structure of the largest non-trivial $1$-intersecting subfamilies of $\ml_{n,r,k}$. Our second main result extends Borg's result.

\begin{thm}\label{T3}
	Let $n,r,k$ and $t$ be positive integers with $n\ge t+2\ge4$, $n\ge r\ge t+1$ and $k\ge\max\{2,g(n,r,t)\}$. 
	Suppose that $\mf$ is a largest non-trivial $t$-intersecting subfamily of $\ml_{n,r,k}$. 
	\begin{itemize}
		\item[\rm{(\ro1)}] If $\min\{r+1,n\}\le2t+2$, then $\mf\cong\mh_1(n,r,k,t+2,t)$.
		\item[\rm{(\ro2)}] If $\min\{r+1,n\}>2t+2$, then $\mf\cong\mh_1(n,r,k,\min\{r+1,n\},t)$.
	\end{itemize}
\end{thm}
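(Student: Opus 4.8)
The plan is to derive Theorem~\ref{T3} from Theorem~\ref{T1} by comparing the cardinalities of the finitely many families that the latter permits. First I would confirm that a largest non-trivial family is big enough for Theorem~\ref{T1} to apply. For $r\ge t+2$ the family $\mh_1(n,r,k,r,t)$ is non-trivial and $t$-intersecting, and its subfamily of those $F$ with $M_t\subset F$ and $|F\cap M_r|\ge t+1$ already has at least $f(n,r,k,r,t)$ members, since $f(n,r,k,r,t)$ is exactly the two-term truncation of the inclusion--exclusion count of this subfamily and the Bonferroni inequalities give a lower bound. Hence $|\mf|\ge|\mh_1(n,r,k,r,t)|\ge f(n,r,k,r,t)$; for $r=t+1$ one has $f(n,t+1,k,t+1,t)=1$, so the bound is trivial. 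Thus Theorem~\ref{T1} applies and $\mf$ is isomorphic to some $\mh_1(n,r,k,m,t)$ with $m\in\{t+2,r,\min\{r+1,n\}\}$ or to some admissible $\mh_2(n,r,k,c,t)$. When $r=t+1$ only $\mh_1(n,r,k,t+2,t)$ can occur and $\min\{r+1,n\}=t+2\le2t+2$, so (\ro1) holds at once.

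Next I would compute these cardinalities as polynomials in $k$. A direct count gives $|\mh_1(n,r,k,\ell,t)|=A(\ell)+B(\ell)$, where $A(\ell)=\sum_{j\ge1}(-1)^{j+1}\binom{\ell-t}{j}\binom{n-t-j}{r-t-j}k^{r-t-j}$ counts the $F$ with $M_t\subset F$ and $|F\cap M_\ell|\ge t+1$, and $B(\ell)$ counts the $F$ with $M_t\not\subset F$ and $|F\cap M_\ell|=\ell-1$, for which the unique point of $M_\ell$ missing from $F$ must lie in $M_t$. Here $A(\ell)$ has degree $r-t-1$ with its top two coefficients read off from the displayed sum, while $B(\ell)$ has degree $r-\ell+1$ and leading coefficient $t\binom{n-\ell+1}{r-\ell+1}$, so $B(\ell)$ reaches degree $r-t-1$ only when $\ell=t+2$. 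Consequently $|\mh_1(n,r,k,\ell,t)|$ has degree $r-t-1$ and leading coefficient $(t+2)\binom{n-t-1}{r-t-1}$ for $\ell=t+2$ and $(\ell-t)\binom{n-t-1}{r-t-1}$ for $\ell\ge t+3$. A similar count shows that $\mh_2(n,r,k,c,t)$ has leading term $(r-t)\binom{n-t-1}{r-t-1}k^{r-t-1}$, its other two parts being of strictly smaller degree.

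The leading coefficients now resolve every configuration but one. The factor multiplying $\binom{n-t-1}{r-t-1}$ is $t+2$ at $\ell=t+2$, equals $\ell-t$ for $\ell\ge t+3$, and equals $r-t$ for each admissible $\mh_2$; it exceeds $t+2$ precisely when the index exceeds $2t+2$. Writing $L=\min\{r+1,n\}$, if $L>2t+2$ then $\mh_1(n,r,k,L,t)$ has the strictly largest leading coefficient, and if $L<2t+2$ then $\mh_1(n,r,k,t+2,t)$ does; in either case the gap of at least $\binom{n-t-1}{r-t-1}$ in the top coefficient dominates all lower-order terms once $k\ge g(n,r,t)$, so the named family is strictly largest and $\mf$ must be isomorphic to it. This gives (\ro2) together with the part of (\ro1) where $L<2t+2$.

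The main obstacle is the borderline $L=2t+2$, namely $r=2t+1$ with $n\ge r+1$, or $r=2t+2=n$. Here $\mh_1(n,r,k,t+2,t)$ and $\mh_1(n,r,k,2t+2,t)=\mh_1(n,r,k,L,t)$ share the leading term $(t+2)\binom{n-t-1}{r-t-1}k^{r-t-1}$, while any further candidate has a strictly smaller leading coefficient, so the comparison descends to the coefficient of $k^{r-t-2}$. Collecting the second coefficients of $A$ and $B$ yields $-(t+1)\binom{n-t-2}{r-t-2}$ for $\ell=t+2$ against $-\binom{t+2}{2}\binom{n-t-2}{r-t-2}$ for $\ell=2t+2$, and since $\binom{t+2}{2}>t+1$ the family $\mh_1(n,r,k,t+2,t)$ is the larger. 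This is exactly the role of the factor $\binom{t+2}{2}$ in $g(n,r,t)$: the hypothesis $k\ge g(n,r,t)$ is calibrated so that this degree-$(r-t-2)$ advantage outweighs the remaining lower-order tail, leaving $\mh_1(n,r,k,t+2,t)$ as the unique largest family and completing (\ro1).
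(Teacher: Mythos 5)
Your proposal is correct and follows the same overall strategy as the paper: check that $|\mf|\ge f(n,r,k,r,t)$ so that Theorem \ref{T1} applies (your Bonferroni argument is the same fact the paper extracts from the identity (\ref{slc})), then decide among the finitely many candidate families by comparing sizes as polynomials in $k$, with the borderline $\min\{r+1,n\}=2t+2$ settled by the second-coefficient comparison $\binom{t+2}{2}>t+1$ --- exactly the paper's Case 3. The one place you genuinely diverge is in eliminating $\mh_1(n,r,k,r,t)$ (when $r<n$) and the families $\mh_2(n,r,k,c,t)$: you expand their cardinalities directly and compare leading coefficients, whereas the paper (Claim \ref{cl2}) never counts these families at all; it observes that a largest family isomorphic to one of them satisfies Assumption \ref{ass1} with $\tau_t(\mt)=t$, $|\mt|\ge2$ and $\ell=r\neq p$, and reuses Lemma \ref{bound>t+1} to force $|\mf|<f(n,r,k,p,t)\le|\mh_1(n,r,k,p,t)|$, a contradiction. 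Your route is more self-contained and elementary, reducing everything to counting the explicit constructions; the paper's route buys the needed upper bound from machinery already established for Theorem \ref{T1} and so avoids verifying the expansion of $\mh_2$. One caveat: the steps you leave as assertions --- that the leading-coefficient gap of $\binom{n-t-1}{r-t-1}k^{r-t-1}$ dominates all lower-order terms once $k\ge g(n,r,t)$, and that in the borderline case the advantage $\binom{t+1}{2}\binom{n-t-2}{r-t-2}k^{r-t-2}$ beats the remaining tail --- are true but constitute the actual computational content of the paper's Section \ref{S4}: a complete write-up would still need the alternating-series truncation bounds and an estimate like the paper's ratio bound $\lambda(i+1)/\lambda(i)\le\frac{1}{4}$ to control the tail $\sum_{i\ge3}\binom{i-1}{2}\binom{t+2}{i}\binom{n-t-i}{r-t-i}k^{r-t-i}$, together with the $t(k-1)$ correction coming from the members with $M_t\not\subset F$.
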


The rest of this paper is organized as follows.
In Section \ref{S2}, we will prove some properties for $t$-intersecting families with $t$-covering number $t+1$ in preparation for the proof of our main results. In Section \ref{S3} and \ref{S4}, we will prove Theorems \ref{T1} and \ref{T3}, respectively.

\section{$t$-intersecting families with $t$-covering number $t+1$}\label{S2}

For a $t$-intersecting subfamily $\mf$ of $\ml_{n,r,k}$, 
a $k$-signed set $T$ on $[n]$ is said to be a $t$-\emph{cover} of $\mf$ if $|T\cap F|\ge t$ for each $F\in\mf$, and the minimum size of $\mf$'s $t$-covers is called the $t$-\emph{covering number} $\tau_t(\mf)$ of $\mf$. 
Observe that $t\le\tau_t(\mf)\le r$, and $\mf$ is trivial if and only if $\tau_t(\mf)=t$. 
In this section, we prove some properties for $t$-intersecting subfamilies of $\ml_{n,r,k}$ with $t$-covering number $t+1$. 

For convenience, we write $\mf_X:=\{F\in\mf: X\subset F\}$ where $\mf$ is a subset of $\ml_{n,r,k}$ and $X$ a $k$-signed set on $[n]$. We also make the following assumption.

\begin{as1} \label{ass1}
	Let $n,r,k$ and $t$ be positive integers with $n\ge r\ge t+1$ and $k\ge2$. Suppose $\mf\subset\ml_{n,r,k}$ is a maximal $t$-intersecting family with $\tau_t(\mf)=t+1$. Let $\mt$ denote the set of all $t$-covers of $\mf$ with size $t+1$. Set $M=\bigcup\limits_{T\in\mt}T$ and $\ell=|M|$.
\end{as1}

 We first claim that $\mt$ is a $t$-intersecting family with $t\le\tau_t(\mt)\le t+1$. In fact, for $T\in\mt$ and $F\in\ml_{n,r,k}$ containing $T$, we have $F\in\mf$ by the maximality of $\mf$. Then for each $T'\in\mt$, there exists $F'\in\mf$ such that $T'\subset F'$ and $T'\cap T=F'\cap F$, which implies that $|T'\cap T|\ge t$, as desired. 
 To describe the structure of some $t$-intersecting families, we need the following lemma.
\begin{lem}\label{XJ}
	Let $n,r,k,t,\ell,\mf,\mt$ and $M$ be as in Assumption \ref{ass1}. 
	\begin{itemize}
		\item[\rm{(\ro1)}] If $\tau_t(\mt)=t+1$, then $M\in\ml_{n,t+2,k}$ and $|F\cap M|\ge t+1$ for each $F\in\mf$.
		
		\item[\rm{(\ro2)}] If $\tau_t(\mt)=t$, then $M\in\ml_{n,\ell,k}$ with $t+1\le\ell\le\min\{r+1,n\}$, and $|F\cap M|=\ell-1$ for each $F\in\mf\bs\mf_S$,  where $S$ is a $t$-cover of $\mt$ with size $t$.
	\end{itemize}
\end{lem}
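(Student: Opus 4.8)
The plan is to split the argument according to the value of $\tau_t(\mt)$, which we already know lies in $\{t,t+1\}$. Two facts will be used repeatedly: since every $T\in\mt$ is a $t$-cover of $\mf$, each $F\in\mf$ satisfies $|F\cap T|\ge t$; and since $\mt$ is $t$-intersecting with all members of size $t+1$, any two of them either coincide or meet in exactly $t$ elements.

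Suppose first $\tau_t(\mt)=t+1$, which is part (\ro1). If any two distinct members of $\mt$ met in $t+1$ elements they would coincide, so $\mt$ would be a single set and $\tau_t(\mt)$ would equal $t$; hence there are distinct $T_1,T_2\in\mt$ with $S:=T_1\cap T_2$ of size $t$, say $T_1=S\cup\{a\}$ and $T_2=S\cup\{b\}$. Because $\tau_t(\mt)=t+1$, the $t$-set $S$ is not a $t$-cover of $\mt$, so some $T_3\in\mt$ satisfies $|S\cap T_3|\le t-1$; testing $T_3$ against $T_1$ and $T_2$ then forces $a,b\in T_3$ and $|S\cap T_3|=t-1$, so $T_3=S'\cup\{a,b\}$ with $S'\subset S$ and $|S'|=t-1$. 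Thus $M$ contains the $(t+2)$-set $U:=S\cup\{a,b\}$. The key step is to confine every $T\in\mt$ inside $U$: if $T$ had an element outside $U$, then $|T\cap T_1|\ge t$ would force $|T\cap U|=t$ and hence $T\cap U\subseteq T_1\cap T_2\cap T_3=S'$, contradicting $|S'|=t-1$. Therefore $M=U\in\ml_{n,t+2,k}$. The same triple-intersection computation, now applied to an arbitrary $F\in\mf$, gives $|F\cap M|\ge t+1$: one always has $|F\cap M|\ge|F\cap T_1|\ge t$, and the value $t$ is excluded because then $F\cap M$ would lie in $T_1\cap T_2\cap T_3=S'$.

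Suppose next $\tau_t(\mt)=t$, which is part (\ro2), and fix a $t$-cover $S$ of $\mt$ with $|S|=t$. Then $S\subseteq T$ for every $T\in\mt$, so each such $T$ has the form $S\cup\{x_T\}$ and $M=S\cup E$ with $E=\{x_T:T\in\mt\}$. The step I expect to be the main obstacle is proving that $M$ is an honest signed set, i.e.\ that the elements of $E$ have pairwise distinct first coordinates. I would argue by contradiction: if two covers $S\cup\{(p,u)\}$ and $S\cup\{(p,v)\}$ shared the first coordinate $p$, then every $F\in\mf$ omits at least one of $(p,u),(p,v)$, and intersecting $F$ with the corresponding cover forces $S\subseteq F$; this makes $S$ a $t$-cover of $\mf$ of size $t$, contradicting $\tau_t(\mf)=t+1$. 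It is precisely here that the value $\tau_t(\mf)=t+1$ and the maximality of $\mf$ enter. Hence $M\in\ml_{n,\ell,k}$ with $\ell=t+|E|\ge t+1$.

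To finish part (\ro2) I would examine $F\in\mf\bs\mf_S$, a nonempty set since otherwise $S$ would again be a $t$-cover of $\mf$. Such an $F$ has $|F\cap S|\le t-1$, while $|F\cap(S\cup\{x_T\})|\ge t$ for every $T$; these force $|F\cap S|=t-1$ and $x_T\in F$ for all $T$, so $E\subseteq F$. Consequently $F\cap M=(F\cap S)\cup E$ has size $(t-1)+|E|=\ell-1$, the claimed equality, and the inclusion $(F\cap S)\cup E\subseteq F$ yields $\ell-1\le|F|=r$, so $\ell\le r+1$; as $\ell\le n$ holds automatically, we obtain $t+1\le\ell\le\min\{r+1,n\}$.
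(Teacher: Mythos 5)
Your proof is correct and takes essentially the same route as the paper: in (\ro1) you use $\tau_t(\mt)=t+1$ to produce $T_3$ meeting $T_1\cap T_2$ in $t-1$ elements and then confine every member of $\mt$ (and force $|F\cap M|\ge t+1$) inside $U=T_1\cup T_2\cup T_3$, and in (\ro2) you write each cover as $S\cup\{x_T\}$ and show $E\subseteq F$ for $F\in\mf\bs\mf_S$, exactly as the paper does. The only cosmetic difference is how the signed-set validity of $M$ is checked: you derive it from a direct contradiction with $\tau_t(\mf)=t+1$, while the paper uses a member of $\mf$ not containing $T_1\cap T_2$, which must contain $T_1\Delta T_2$; these are contrapositive forms of the same observation.
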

\begin{proof}
	(\ro1) Let $T_1$ and $T_2$ be distinct members of $\mt$. We claim that $T_1\Delta T_2\in\ml_{n,2,k}$. 
	Indeed, since $|T_1\cap T_2|=t$ and $\mf$ is non-trivially $t$-intersecting, we have $|T_1\Delta T_2|=2$ and there exists a member of $\mf\bs\mf_{T_1\cap T_2}$ containing $T_1\Delta T_2$, which imply that $T_1\Delta T_2\in\ml_{n,2,k}$.
	
	Since $\tau_t(\mt)=t+1$, there exists $T_3\in\mt$ such that $T_1\cap T_2\not\subset T_3$. 
	From $|T_1\cap T_3|\ge t$ and $|T_2\cap T_3|\ge t$, we get $T_1\Delta T_2\subset T_3$ and $|T_3\cap(T_1\cap T_2)|=t-1$, which imply that $T_3\subset T_1\cup T_2$. 
	For each $T_4\in\mt\bs\{T_1\}$ containing $T_1\cap T_2$, we have $T_1\cap T_3\not\subset T_4$.  	
	Similarly, we have $T_4\subset T_1\cup T_3\subset T_1\cup T_2$. 
	Hence $M\subset T_1\cup T_2\subset M$. 
	Then it follows from the claim that $M\in\ml_{n,t+2,k}$. 
	For each $F\in\mf$, we have $|F\cap M|\ge t$. If $|F\cap M|=t$, then $F\cap M$ is contained in each member of $\mt$, which is impossible. Therefore, $|F\cap M|\ge t+1$, as desired.

	(\ro2)  By the claim in (\ro1), it is routine to check that $M\in\ml_{n,\ell,k}$. 
	Let $S$ be a $t$-cover of $\mt$. 
	For each $F\in\mf\bs\mf_S$ and $T\in\mt$, we have $|F\cap T|=t$, from which we get $r+1\le|S\cup F|\le|T\cup F|=r+1$. 
	Then $S\cup F=T\cup F$, which implies that $|M\cup F|=|S\cup F|=r+1$. 
	Hence $|F\cap M|=\ell-1$ and $\ell\le r+1$. 
	Together with $M\in\ml_{n,\ell,k}$ and $\mt\neq\emptyset$, we obtain $t+1\le\ell\le\min\{r+1,n\}$, as required.
\end{proof}

For a $k$-signed set $Q=\{(s_1,t_1),\dots,(s_q,t_q)\}$ on $[n]$ with $s_1\le\dots\le s_q$, let $\pi_0=(q\ s_q)(q-1\ s_{q-1})\cdots(1\ s_1)\in S_n$, and for each $x\in[n]$,  let $\pi_x\in S_k$ with $\pi_x=(1\ t_i)$ if $x=s_i$ for some $i\in[q]$ and $\pi_x=(1)$ otherwise. 
We get a bijection $\pi$ from $[n]\times[k]$ to itself with $\pi(x,y)=(\pi_0(x),\pi_x(y))$ for each $(x,y)\in[n]\times[k]$.
Observe that $\pi(Q)=M_q$, and $\pi(\ml_{n,s,k})=\ml_{n,s,k}$ for each $s\in[n]$. 
It is routine to check that there exists a bijection $\sigma$ from $[n]\times[k]$ to itself such that $\sigma(\mf)$ is a $t$-intersecting subfamily of $\ml_{n,r,k}$ with $t$-covering number $t+1$,  $M_\ell=\bigcup_{T\in\mt'}T$, 
and $M_t$ is a $t$-cover of $\mt'$ if $\tau_t(\mt)=t$, where $\mt'$ is the set of all $t$-covers of $\sigma(\mf)$ with size $t+1$. 
Let $\mg$ denote the family $\sigma(\mf)$.

\begin{lem}\label{p1}
	Let $n,r,k,t,\ell,\mf,\mt$ and $M$ be as in Assumption \ref{ass1}. Suppose that $|F\cap M|\ge t+1$ for each $F\in\mf$.
	\begin{itemize}	
		\item[\rm{(\ro1)}] If $\tau_t(\mt)=t+1$, then $\mf\cong\mh_1(n,r,k,t+2,t)$.
		\item[\rm{(\ro2)}] If $\tau_t(\mt)=t$, then $\mf\cong\mh_1(n,r,k,\ell,t)$ where  $\ell\in\{t+3,\dots,\min\{r+1,n\}\}$.	
	\end{itemize}
\end{lem}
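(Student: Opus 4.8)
The plan is to unify both parts through one mechanism: establish the inclusion $\mg\subseteq\mh_1$ directly from Lemma \ref{XJ} together with the hypothesis, and then upgrade it to equality by invoking the maximality of $\mf$. Using the normalization recorded just before the lemma, I would work with the isomorphic copy $\mg=\sigma(\mf)$, for which $\bigcup_{T\in\mt'}T=M_\ell$ and, when $\tau_t(\mt)=t$, the distinguished size-$t$ cover $S$ may be taken to be $M_t$; here $\mt'$ is the set of size-$(t+1)$ $t$-covers of $\mg$. Since $\sigma$ is a bijection, $\mg$ is again a maximal $t$-intersecting family with $\tau_t(\mg)=t+1$, the hypothesis becomes $|F\cap M_\ell|\ge t+1$ for all $F\in\mg$, and Lemma \ref{XJ} applies to $\mg$. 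Because $\mh_1$ is built from the fixed chain $M_t\subseteq M_\ell$, it then suffices to prove $\mg=\mh_1(n,r,k,\ell,t)$ (resp. $\mg=\mh_1(n,r,k,t+2,t)$), which gives $\mf\cong\mh_1$.

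Before the case split I would record two facts about the construction. First, $\mh_1(n,r,k,\ell,t)$ is $t$-intersecting for every $\ell\ge t+2$: for $F,F'\in\mh_1$, if both contain $M_t$ then $|F\cap F'|\ge|M_t|=t$, and otherwise I compare their traces on $M_\ell$ via $|F\cap F'\cap M_\ell|\ge|F\cap M_\ell|+|F'\cap M_\ell|-\ell$, which is $\ge t$ in each remaining subcase, the tightest bound (both omitting $M_t$) being $\ge\ell-2\ge t$. Second, for $\ell=t+2$ the definition simplifies to $\mh_1(n,r,k,t+2,t)=\{F\in\ml_{n,r,k}:|F\cap M_{t+2}|\ge t+1\}$, since any $F$ with $|F\cap M_{t+2}|=t+1$ falls into exactly one defining clause according to whether its unique omitted point of $M_{t+2}$ lies in $M_t$; this reformulation is symmetric in the $t+2$ coordinates and will be used twice.

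For part (\ro1), Lemma \ref{XJ}(\ro1) yields $M=M_{t+2}$ and $|F\cap M_{t+2}|\ge t+1$ for every $F\in\mg$, so the reformulation gives $\mg\subseteq\mh_1(n,r,k,t+2,t)$. Since this family is $t$-intersecting and $\mg$ is maximal, the inclusion is an equality, whence $\mf\cong\mh_1(n,r,k,t+2,t)$.

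For part (\ro2), I would verify $\mg\subseteq\mh_1(n,r,k,\ell,t)$ by splitting on $M_t$: if $M_t\subseteq F$ the hypothesis gives $|F\cap M_\ell|\ge t+1$, while if $M_t\not\subseteq F$ then $F\in\mg\bs\mg_{M_t}=\mg\bs\mg_S$, so Lemma \ref{XJ}(\ro2) gives $|F\cap M_\ell|=\ell-1$; either way $F\in\mh_1$. Maximality again forces $\mg=\mh_1(n,r,k,\ell,t)$. It remains to pin down $\ell$. Lemma \ref{XJ}(\ro2) gives $\ell\le\min\{r+1,n\}$; the value $\ell=t+1$ is impossible because the hypothesis would force $M_{t+1}\subseteq F$ for all $F$, making $M_t$ a $t$-cover of size $t$ and contradicting $\tau_t(\mf)=t+1$. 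The value $\ell=t+2$ is the one delicate point, and the main obstacle: the equality just proved would read $\mg=\mh_1(n,r,k,t+2,t)=\{F:|F\cap M_{t+2}|\ge t+1\}$, and for this family a short computation shows that every $(t+1)$-subset of $M_{t+2}$ is a $t$-cover, so $\mt'$ contains all $t+2$ such subsets (note $t+2\ge3$) and hence has empty intersection; then no $t$-set can be a $t$-cover of $\mt'$, contradicting the standing assumption $\tau_t(\mt)=t$. Thus $\ell\ge t+3$, placing $\ell$ in $\{t+3,\dots,\min\{r+1,n\}\}$. The only genuinely structural step is this exclusion: one must recognize that the candidate family at $\ell=t+2$ is precisely the part-(\ro1) family, for which $\tau_t(\mt')=t+1$ rather than $t$. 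The rest is bookkeeping on Lemma \ref{XJ} and the maximality of $\mf$.
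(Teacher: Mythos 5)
Your proposal is correct and follows essentially the same route as the paper: pass to the normalized copy $\mg=\sigma(\mf)$, use Lemma \ref{XJ} together with the hypothesis to get $\mg\subset\mh_1$, upgrade to equality by maximality of $\mg$ and the $t$-intersecting property of $\mh_1$, and in part (\ro2) rule out $\ell=t+1$ via non-triviality and $\ell=t+2$ by observing that the resulting family would have $\tau_t(\mt)=t+1$. You merely make explicit two points the paper leaves terse (the verification that $\mh_1(n,r,k,\ell,t)$ is $t$-intersecting, and the cover-intersection argument excluding $\ell=t+2$), and you harmlessly omit the paper's redundant observation that $\{G\in\ml_{n,r,k}:M_t\subsetneq G\cap M_\ell\}\subset\mg$.
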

\begin{proof}
	
	(\ro1) \ If $\tau_t(\mt)=t+1$, by Lemma \ref{XJ} (\ro1) we have $M\in\ml_{n,t+2,k}$. By assumption and $\mf\cong\mg$, we have $|G\cap M_{t+2}|\ge t+1$ for each $G\in\mg$. Then $\mg\subset\mh_1(n,r,k,t+2,t)$. 
	Since $\mh_1(n,r,k,t+2,t)$ is $t$-intersecting and $\mg$ is maximal, we have $\mf\cong\mg=\mh_1(n,r,k,t+2,t)$.
	
	(\ro2) \  Since $\mf$ is non-trivially $t$-intersecting, by Lemma \ref{XJ} (\ro2), we have $t+2\le\ell\le\min\{r+1,n\}$. 
	Notice that each $(t+1)$-subset of $M_\ell$ containing $M_t$ is a $t$-cover of $\mg$. 
	Then $\{G\in\ml_{n,r,k}: M_t\subsetneq G\cap M_\ell\}\subset\mg$. By Lemma \ref{XJ} (\ro2), we have $|G\cap M_\ell|=\ell-1$ for each $G\in\mg\bs\mg_{M_t}$. Hence $\mg\subset\mh_1(n,r,k,\ell,t)$. Since $\mg$ is maximal and $\mh_1(n,r,k,\ell,t)$ is $t$-intersecting, we have $\mf\cong\mg=\mh_1(n,r,k,\ell,t)$. Notice that $\tau_t(\mt)=t+1$ if $\ell=t+2$. Then $\ell\ge t+3$, as desired.
\end{proof}

\begin{lem}\label{p2}
	Let $n,r,k,t,\ell,\mf,\mt$ and $M$ be as in Assumption \ref{ass1}. Suppose that there exists $F_0\in\mf$ such that $|F_0\cap M|=t$. Then $t\le r-2$ and $\ell<\min\{r+1,n\}$. Moreover, if $\ell=\min\{r+1,n\}-1$, then $r\le n-2$ and $\mf\cong\mh_2(n,r,k,c,t)$ for some $c\in\{r+2,\dots,\min\{2r-t,n\}\}$.
\end{lem}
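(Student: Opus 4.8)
The plan is to treat the two numerical assertions first and then extract the full structure in the ``moreover'' part; the element $F_0$ with $|F_0\cap M|=t$ is used at every turn. I begin by proving $t\le r-2$, which needs nothing beyond $\mf$ being $t$-intersecting: if $r=t+1$, then each $F\in\mf$ has size $t+1$ and satisfies $|F\cap F'|\ge t$ for all $F'\in\mf$, so $F$ is itself a $t$-cover of $\mf$ of size $t+1$; hence $F\in\mt$ and $F\subseteq M$, forcing $|F\cap M|=t+1$ for every $F\in\mf$ and contradicting $|F_0\cap M|=t$. So $r\ge t+2$. Since $|F_0\cap M|=t$, Lemma~\ref{XJ}~(\ro1) excludes $\tau_t(\mt)=t+1$, so $\tau_t(\mt)=t$; passing to the normalised copy $\mg=\sigma(\mf)$ I may assume $M=M_\ell$ and that $M_t$ is a $t$-cover of $\mt'$, and I put $G_0=\sigma(F_0)$, so $|G_0\cap M_\ell|=t$. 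Two facts will be used repeatedly: because $|M_t|=t$, every $T\in\mt'$ contains $M_t$, and by maximality $\{G\in\ml_{n,r,k}:M_t\subset G,\ |G\cap M_\ell|\ge t+1\}\subseteq\mg$; and $\mg\setminus\mg_{M_t}\neq\emptyset$, since a $t$-set such as $M_t$ cannot $t$-cover the non-trivial family $\mg$.

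To prove $\ell<\min\{r+1,n\}$ I assume $\ell=\min\{r+1,n\}$ and derive a contradiction. By Lemma~\ref{XJ}~(\ro2) every $G\in\mg\setminus\mg_{M_t}$ meets $M_\ell$ in $\ell-1$ elements, and $\ell-1>t$ because $r\ge t+2$; as $|G_0\cap M_\ell|=t$ this forces $G_0\in\mg_{M_t}$, whence $G_0\cap M_\ell=M_t$ and $R:=G_0\setminus M_\ell$ satisfies $R\cap M_\ell=\emptyset$, $|R|=r-t\ge2$. Pick any $G_1\in\mg\setminus\mg_{M_t}$. The $M_\ell$-part of $G_0\cap G_1$ lies in $M_t\cap G_1\subsetneq M_t$, and the $R$-part is empty: if $\ell=r+1$ then $G_1\subseteq M_\ell$, while if $\ell=n=r$ the single element of $G_1$ outside $M_\ell$ has a coordinate in $[t]$, whereas $R$ uses coordinates $>t$. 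Hence $|G_0\cap G_1|\le t-1$, contradicting that $\mg$ is $t$-intersecting.

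Now suppose $\ell=\min\{r+1,n\}-1$, and consider first $n\ge r+1$, so that $\ell=r$ and $M_\ell=M_r$. As in the previous paragraph $G_0\in\mg_{M_t}$, $G_0\cap M_r=M_t$, and $R:=G_0\setminus M_r$ has $|R|=r-t\ge2$. Here each $G\in\mg\setminus\mg_{M_t}$ is of the form $(M_r\setminus\{(p,1)\})\cup\{w\}$ with $p\in[t]$ and a single outside element $w$; intersecting with $G_0$ forces $w\in R$, and to avoid a repeated coordinate in $G$ the coordinate of $w$ must exceed $r$. Let $W$ collect these outside elements. If $|W|=1$, say $W=\{w_0\}$, then $M_t\cup\{w_0\}$ meets every member of $\mg_{M_t}$ in $\ge t$ and every member of $\mg\setminus\mg_{M_t}$ in $(t-1)+1=t$, so it is a $t$-cover of size $t+1$ and $w_0\in\bigcup\mt'=M_r$, contradicting that the coordinate of $w_0$ exceeds $r$. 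Hence $|W|\ge2$; since the elements of $W\subseteq R\subseteq G_0$ have distinct coordinates, all $>r$, this needs two coordinates beyond $r$ and excludes $n=r+1$. The remaining case $n=r$ (where $\ell=r-1$) is excluded by the same cover-promotion idea: all members of $\mg\setminus\mg_{M_t}$ are then forced to share their coordinate-$r$ element $w_0$, giving a forbidden $(t+1)$-cover $M_t\cup\{w_0\}$ with $w_0\notin M_\ell$, with the degenerate sub-case $r=t+2$ (where $\mg\setminus\mg_{M_t}$ may instead produce a $t$-cover of size $t$, against $\tau_t(\mg)=t+1$) handled directly. Thus $r\le n-2$, $\ell=r$, and $M_\ell=M_r$.

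Finally I identify $\mg$ with an $\mh_2$. Relabelling the coordinates $>r$ and the signs on them (which fixes $M_r$ and $M_t$), I may assume $W=M_c\setminus M_r$ with $c:=r+|W|$; then $c\ge r+2$, and $c\le\min\{2r-t,n\}$ because $|W|\le|R|=r-t$ and $|W|\le n-r$. For the inclusion $\mg\subseteq\mh_2(n,r,k,c,t)$: a member of $\mg_{M_t}$ with $|G\cap M_r|\ge t+1$ is of the first type; a member with $G\cap M_r=M_t$ must contain $w$ for every $w\in W$ (by $t$-intersecting the member $(M_r\setminus\{(p,1)\})\cup\{w\}$), hence $M_c\setminus M_r\subseteq G$, the second type; and a member of $\mg\setminus\mg_{M_t}$ is exactly the third type. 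As $\mh_2(n,r,k,c,t)$ is $t$-intersecting and $\mg$ is a maximal $t$-intersecting family contained in it, $\mg=\mh_2(n,r,k,c,t)$, that is, $\mf\cong\mh_2(n,r,k,c,t)$. The main obstacle is this last block together with the exclusion of $n\le r+1$: one must show the outside elements all have coordinates $>r$ and are common to the second-type members, so that a single relabelling normalises $W$ to $M_c\setminus M_r$ while fixing $M_r$ and $M_t$, and one must dispose of the degenerate configuration $r=t+2$ by hand.
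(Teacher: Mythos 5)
Your proof follows the paper's route step for step: excluding $\tau_t(\mt)=t+1$ via Lemma \ref{XJ}(\ro1), ruling out $r=t+1$, proving $\ell<\min\{r+1,n\}$ by exhibiting two members meeting in fewer than $t$ elements, and, for the ``moreover'' part, collecting the outside elements of the members not containing $M_t$ (your $W$ is the paper's $E$), showing $|W|\ge2$ by promoting $M_t\cup\{w_0\}$ to a $t$-cover of size $t+1$ not contained in $M_\ell$, and finishing by relabelling $W$ to $M_c\bs M_r$ and playing maximality against the $t$-intersecting family $\mh_2(n,r,k,c,t)$. All of that is sound, and your exclusion of $n=r+1$ via $|W|\ge2$ is a harmless reorganization of the paper's case $r\ge n-1$.

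The one genuine gap is exactly the corner you flag but do not actually dispose of: $n=r$ with $r=t+2$, i.e.\ $\ell=t+1$ and $\mt'=\{M_{t+1}\}$. Everywhere else you use the dichotomy ``either $G\supset M_t$ or $|G\cap M_\ell|=\ell-1>t$'' to force $G_0\cap M_\ell=M_t$; here $\ell-1=t$, so $G_0\cap M_\ell$ may be a $t$-subset of $M_{t+1}$ different from $M_t$, and then the promotion of $M_t\cup\{w_0\}$ genuinely fails (members of $\mg\bs\mg_{M_t}$ missing the same element of $M_t$ as $G_0$ are not forced to contain $w_0$). Your parenthetical mechanism --- that $\mg\bs\mg_{M_t}$ ``may instead produce a $t$-cover of size $t$'' --- does not correspond to anything that happens, so as written the sub-case is unproved. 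The repair stays inside your method: since every $T\in\mt'$ satisfies $|T\cap G_0|\ge t$ and $T\subset M_\ell$, the set $S':=G_0\cap M_\ell$ is contained in every member of $\mt'$, hence is itself a $t$-cover of $\mt'$ of size $t$; rerunning your cover-promotion with $S'$ in place of $M_t$ (Lemma \ref{XJ}(\ro2) applies verbatim with $S'$) forces all members of $\mg\bs\mg_{S'}$ to share $G_0$'s coordinate-$n$ element $w_0$, and $S'\cup\{w_0\}$ is then a $t$-cover of size $t+1$ not contained in $M_\ell$, a contradiction. This is in effect what the paper does, since its normalization may be chosen with $S=F_0\cap M$, making $G_0\cap M_\ell=M_t$ throughout.
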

\begin{proof}
	By Lemma \ref{XJ} (\ro1), we have $\tau_t(\mt)=t$. If $r=t+1$, then $\mt=\mf$, which implies that $\tau_t(\mt)=t+1$, a contraction. Hence $r\ge t+2$. 
	Observe that $F_0\cap M$ is a $t$-cover of $\mt$. 
	Let $F\in\mf\bs\mf_{F_0\cap M}$. 
	If $\ell=\min\{r+1,n\}$, by Lemma \ref{XJ} (\ro2), we have $|F\cap F_0|=|F\cap(F_0\cap M)|<t$, which is impossible. 
	Therefore, $\ell<\min\{r+1,n\}$.
	
	Now suppose that $\ell=\min\{r+1,n\}-1$. Since $\mf\cong\mg$, there exists $G_0\in\mg$ such that $|G_0\cap M_\ell|=t$. 
	 Let $G\in\mg\bs\mg_{M_t}$.
	If $r\ge n-1$, 
	then $\ell=n-1$. 
	By Lemma \ref{XJ} (\ro2), we have $|G_0\cap G\cap([n-1]\times[k])|=t-1$, which implies that $(n,x_0)\in G_0\cap G$ for some $x_0\in[k]$. 
	Then $M_t\cup\{(n,x_0)\}$ is a $t$-cover of $\mg$, which is impossible since $\ell<n$ and each member of $\mt'$ is contained in $M_\ell$. 
	Hence $r\le n-2$ and $\ell=r$.
	
	By $|G_0\cap G|\ge t$ and Lemma \ref{XJ} (\ro2), we obtain $G\bs([r]\times[k])\in\binom{G_0}{1}$. 
	Let $$E=\{(i,j):i\ge r+1,\ (i,j)\in G\ \mbox{for some}\ G\in\mg\bs\mg_{M_t}\}.$$
	Observe that $E$ is a non-empty subset of $G_0$ and $E\cap M_r=\emptyset$. 
	We have $1\le|E|\le\min\{r-t,n-t\}$. 
	If $E=\{(e_1,e_2)\}$ for some $e_1\ge t+1$ and $e_2\in[k]$, then $(e_1,e_2)$ is contained in each member of $\mg\bs\mg_{M_t}$, which implies that $M_t\cup\{(e_1,e_2)\}\in\mt'$, a contradiction. 
	Therefore $|E|\ge2$. 
	Since $M_t$ is a $t$-cover of $\mt'$, then each $(t+1)$-subset of $M_r$ containing $M_t$ is a member of $\mt'$, which implies that $\{G\in\ml_{n,r,k}: M_t\subsetneq G\cap M_r\}\subset \mg$. 
	For each $G_0'\in\mg_{M_t}$ with $|G_0'\cap M_r|=t$, observe that $G\bs([r]\times[k])\subset G_0'$. Then we have $E\subset G_0'$. 
	For each $G'\in\mg\bs\mg_{M_t}$, we have $|G'\cap M_r|=r-1$ and $G'\cap E\neq\emptyset$. 
	Together with $2\le|E|\le\min\{r-t,n-t\}$, it is routine to check that $\mg$ is isomorphic to a subset of $\mh_2(n,r,k,c,t)$ where $r+2\le c\le\min\{2r-t,n\}$. 
	Since that $\mg$ is maximal and $\mh_2(n,r,k,c,t)$ is $t$-intersecting, we have $\mf\cong\mg\cong\mh_2(n,r,k,c,t)$, as desired.
\end{proof}

Now we prove upper bounds for sizes of families under Assumption \ref{ass1} with $\tau_t(\mt)=t$. We begin with a frequently used lemma.
\begin{lem}\label{FS}
	Let $n,r,k,t$ and $u$ be positive integers with $n\ge r\ge u+1$. Suppose $\mf\subset\ml_{n,r,k}$ is a $t$-intersecting family and $U\in\ml_{n,u,k}$. If $|U\cap F|=s<t$ for some $F\in\mf$, then there exists $R\in\ml_{n,u+t-s,k}$ such that $|\mf_U|\le\binom{r-s}{t-s}|\mf_R|$.
\end{lem}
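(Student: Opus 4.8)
The plan is to reduce the statement to a pigeonhole argument over the $(t-s)$-subsets of $F\bs U$, where $F\in\mf$ is the fixed member with $|U\cap F|=s<t$. We may assume $\mf_U\neq\emptyset$, since otherwise $|\mf_U|=0$ and any $R\in\ml_{n,u+t-s,k}$ with $U\subset R$ satisfies the inequality. The guiding observation is that $\binom{r-s}{t-s}$ counts exactly the $(t-s)$-subsets of $F\bs U$, because $|F\bs U|=r-|U\cap F|=r-s$; this binomial is well defined since $t\le r$ (a $t$-intersecting family of $r$-sets forces $t\le r$) and $s<t$ give $0\le t-s\le r-s$.

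First I would extract from the $t$-intersecting property the key local bound: for every $F'\in\mf_U$ we have $U\subset F'$ and $|F'\cap F|\ge t$, while $F'\cap F\cap U=U\cap F$ has size $s$, so $|F'\cap(F\bs U)|\ge t-s\ge1$. Hence each $F'\in\mf_U$ contains at least one $(t-s)$-subset of $F\bs U$, and any such subset $D\subset F'\cap(F\bs U)$ automatically has the property that $U\cup D\subset F'$ is a genuine $k$-signed set, its first coordinates being distinct as coordinates of the signed set $F'$.

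The one subtlety, and the only nonroutine point, is precisely this last compatibility issue: an arbitrary $(t-s)$-subset $D$ of $F\bs U$ need not give a valid signed set $U\cup D$, since an element of $D$ may share its first coordinate with an element of $U$. To sidestep this I would pass to the subcollection $\md$ of those $(t-s)$-subsets $D\subset F\bs U$ with $U\cup D\in\ml_{n,u+t-s,k}$; the previous paragraph shows every $F'\in\mf_U$ contains a member of $\md$, while trivially $|\md|\le\binom{r-s}{t-s}$. Counting incidences $(F',D)$ with $F'\in\mf_U$, $D\in\md$ and $D\subset F'$, and applying pigeonhole, yields a single $D_0\in\md$ contained in at least $|\mf_U|/\binom{r-s}{t-s}$ members of $\mf_U$. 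Setting $R=U\cup D_0\in\ml_{n,u+t-s,k}$, I would conclude by noting $\{F'\in\mf_U:D_0\subset F'\}=\mf_R$ (as $R\supset U$), so that $|\mf_R|\ge|\mf_U|/\binom{r-s}{t-s}$, which is the desired inequality. Everything beyond the validity-of-$R$ bookkeeping is a direct double count.
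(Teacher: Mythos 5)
Your proof is correct and is essentially the paper's own argument: the paper works with $\mr=\{R\in\ml_{n,u+t-s,k}: U\subset R\subset F\cup U\}$, which corresponds bijectively to your collection $\md$ via $D\mapsto U\cup D$, and both proofs use the $t$-intersecting property to show every member of $\mf_U$ contains such an $R$ and then conclude by pigeonhole from $|\mf_U|\le\sum_{R}|\mf_R|$ with at most $\binom{r-s}{t-s}$ terms. Your explicit check that $U\cup D$ is a legitimate signed set (being a subset of the signed set $F'$) is exactly how the paper handles this point implicitly, by demanding $R\in\ml_{n,u+t-s,k}$ and locating $R$ inside a member $G\in\mf_U$.
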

\begin{proof}
	W.l.o.g., assume that $\mf_U\neq\emptyset$. Let $\mr$ denote the set of $R\in\ml_{n,u+t-s,k}$ such that  $U\subset R\subset F\cup U$. 
	For $G\in\mf_U$, from $|G\cap F|\ge t$ and $|F\cap U|=s<t$, we obtain $|G\cap(F\cup U)|\ge u+t-s$, which implies that $\mr\neq\emptyset$ and $\mf_U=\bigcup_{R\in\mr}\mf_R$. Since $|F\cup U|=u+r-s$, we have $|\mr|\le\binom{r-s}{t-s}$. 
	Then the desired result holds by  $|\mf_U|\le\sum_{R\in\mr}|\mf_R|$.
\end{proof}

\begin{lem}\label{boundt+1}
	Let $n,r,k,t,\ell,\mf,\mt$ and $M$ be as in Assumption \ref{ass1} with $|\mt|=1$. Then
	\begin{equation*}\label{l=t+1}
		|\mf|\le\binom{n-t-1}{r-t-1}k^{r-t-1}+(t+1)(r-t)^2\binom{n-t-2}{r-t-2}k^{r-t-2}.
	\end{equation*}
\end{lem}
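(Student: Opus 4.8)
The plan is to use the hypothesis $|\mt|=1$ to single out the unique minimum $t$-cover $T$ (so $M=T\in\ml_{n,t+1,k}$), and then bound $|\mf|$ by splitting its members according to how they meet $T$. Since $T$ is a $t$-cover, every $F\in\mf$ satisfies $|F\cap T|\ge t$, so $\mf$ is the disjoint union of $\mf_T$ (those $F\supseteq T$) and, for each of the $t+1$ many $t$-subsets $S\subset T$, the family $\mb_S:=\{F\in\mf:F\cap T=S\}$. The first piece is immediate: a member of $\mf_T$ is obtained by adjoining $r-t-1$ further signed elements on new coordinates, so $|\mf_T|\le\binom{n-t-1}{r-t-1}k^{r-t-1}$, which is exactly the first term of the claimed bound. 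It therefore remains to show $|\mb_S|\le(r-t)^2\binom{n-t-2}{r-t-2}k^{r-t-2}$ for each $S$; summing over the $t+1$ subsets then yields the second term. (Here $r\ge t+2$: if $r=t+1$ then every member of $\mf$ is itself a $t$-cover of size $t+1$, forcing $\mt=\mf$ and hence, by $|\mt|=1$, $|\mf|=1$, which gives $\tau_t(\mf)=t$, a contradiction. Thus $|F\bs T|=r-t\ge2$ and the exponents above are nonnegative.)

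To bound $|\mb_S|$ I would force every $F\in\mb_S$ to contain, besides $S$, two distinct signed elements lying \emph{outside} $T$ and drawn from two explicit sets of size at most $r-t$. First, since $\tau_t(\mf)=t+1>t$, the $t$-set $S$ is not a $t$-cover, so there is a witness $F_1\in\mf$ with $|F_1\cap S|\le t-1$. For $F\in\mb_S$ we have $|F\cap F_1|\ge t$ while $|F\cap F_1\cap S|\le t-1$, so $F$ meets $F_1\bs S$; any common element $e$ satisfies $e\in F$ and $e\notin S$, hence $e\notin T$ (otherwise $e\in F\cap T=S$), i.e. $e\in F_1\bs T$. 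The key gain is that, because $T$ is a $t$-cover, $|F_1\cap T|\ge t$ and so $|F_1\bs T|\le r-t$: the first extra element ranges over a set of size at most $r-t$.

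For the second element I would exploit the uniqueness $|\mt|=1$. For $e\in F_1\bs T$ the $(t+1)$-set $S\cup\{e\}$ is different from $T$ (as $e\notin T$), hence is not a $t$-cover, so there is a further witness $F_2=F_2(e)\in\mf$ with $|F_2\cap(S\cup\{e\})|\le t-1$. Exactly as before, any $F\in\mb_S$ containing $S\cup\{e\}$ must meet $F_2\bs(S\cup\{e\})$ in some element $e'$, and $e'\notin T$, $e'\ne e$, $e'\in F_2\bs T$ with $|F_2\bs T|\le r-t$. Consequently $\mb_S\subseteq\bigcup_{e}\bigcup_{e'}\{F\in\ml_{n,r,k}:S\cup\{e,e'\}\subseteq F\}$, where $e$ runs over $F_1\bs T$ and $e'$ over $F_2(e)\bs T$; since $S\cup\{e,e'\}$ is a fixed signed set on $t+2$ distinct coordinates, each inner family has size at most $\binom{n-t-2}{r-t-2}k^{r-t-2}$, giving $|\mb_S|\le(r-t)^2\binom{n-t-2}{r-t-2}k^{r-t-2}$, as needed.

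The main obstacle, and the place where the hypotheses really enter, is producing two forced elements rather than one while keeping their ranges of size $\le r-t$. A single witness only guarantees one common element, and naively that element ranges over $F_1\bs S$, a set of size up to $r-t+1$, which is too weak; the two improvements are (i) observing that the forced elements must avoid $T$ entirely and so live in $F_i\bs T$ of size $\le r-t$ (this uses that $T$ is a $t$-cover), and (ii) using $|\mt|=1$ to guarantee that $S\cup\{e\}$ fails to be a cover and hence supplies a second witness. Once both elements are pinned to size-$(r-t)$ ranges the counting is routine, and the two contributions assemble exactly into the stated bound.
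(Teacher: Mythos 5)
Your proof is correct, and its skeleton matches the paper's: both arguments split $\mf$ into $\mf_T$ and the trace classes $\{F\in\mf: F\cap T=S\}$ over the $t+1$ sets $S\in\binom{T}{t}$, produce a first witness from the fact that the $t$-set $S$ is not a $t$-cover, and then invoke $|\mt|=1$ to obtain a second witness because $S\cup\{e\}\neq T$ cannot be a $t$-cover. Where you genuinely diverge is in how the factor $r-t$ for the second forced element is extracted. The paper applies its Lemma \ref{FS} to the $(t+1)$-set $I=S\cup\{e\}$, which only yields $|\mf_I|\le(r-t+1)\binom{n-t-2}{r-t-2}k^{r-t-2}$, and then recovers $r-t$ by subtracting $|\mf_{I\cup T}|=\binom{n-t-2}{r-t-2}k^{r-t-2}$ --- an exact count that uses the maximality of $\mf$ (every $r$-set containing $T$ lies in $\mf$). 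You instead observe that for $F$ in the trace class the second common element $e'$ must avoid all of $T$ (not merely $S\cup\{e\}$), hence lies in $F_2\bs T$, a set of size at most $r-t$; this gives the factor directly, bypassing both Lemma \ref{FS} and any appeal to maximality, so your argument in fact proves the bound for an arbitrary (not necessarily maximal) family satisfying the covering hypotheses. One point you should make explicit: if the first coordinate of $e\in F_1\bs T$ coincides with a coordinate of $S$, then $S\cup\{e\}$ is not a $k$-signed set, so it is not eligible to be a $t$-cover and no witness $F_2(e)$ exists; but then no member of $\ml_{n,r,k}$ contains $S\cup\{e\}$, so that term of your union is empty and contributes nothing (the paper sidesteps this by quantifying only over $I\in\ml_{n,t+1,k}$). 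With that caveat recorded, your counting assembles exactly into the stated bound.
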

\begin{proof}
	Suppose that $T_0$ is the unique element of $\mt$. 	
	We have
	\begin{equation}\label{t+1,1}
		\mf=\mf_{T_0}\cup\left(\bigcup_{W\in\binom{T_0}{t}}\mf_{W}\bs\mf_{T_0}\right).
	\end{equation}
	For each $W\in\binom{T_0}{t}$, there exists $F_1\in\mf\bs\mf_{T_0}$ such that $|W\cap F_1|<t$. 
	Since $|F_1\cap T_0|=t$, we have $|F_1\cap W|=t-1$. 
	Let $H_1=F_1\cup W$. It is routine to check that $|H_1|=r+1$ and $T_0\subset H_1$. 
	For each $F_1'\in\mf_W\bs\mf_{T_0}$, we have $|F_1'\cap H_1|\ge t+1$ by $|F_1\cap F_1'|\ge t$. 
	Then
	\begin{equation}\label{t+1,2}
		\mf_{W}\bs\mf_{T_0}=\bigcup_{I\in\ml_{n,t+1,k}\bs\{T_0\},\ W\subset I\subset H_1}\mf_I\bs\mf_{T_0}.
	\end{equation}
	For each $I\in\ml_{n,t+1,k}\bs\{T_0\}$ with $W\subset I\subset H_1$, since $I\not\in\mt$, there exists $F_1''\in\mf$ such that $t-1\le|F_1''\cap W|\le|F_1''\cap I|\le t-1$. 
	Observe that $I\cup T_0\in\ml_{n,t+2,k}$. Since $\mf$ is maximal and $T_0$ is a $t$-cover of $\mf$, each element of $\ml_{n,r,k}$ containing $T_0$ is a member of $\mf$, which implies that $|\mf_{I\cup T_0}|=\binom{n-t-2}{r-t-2}k^{r-t-2}$. 
	By Lemma \ref{FS}, we have $|\mf_I|\le(r-t+1)\binom{n-t-2}{r-t-2}k^{r-t-2}$. 
	Then
	\begin{equation}\label{t+1,3}
		|\mf_I\bs\mf_{T_0}|=|\mf_I|-|\mf_{I\cup T_0}|\le(r-t)\binom{n-t-2}{r-t-2}k^{r-t-2}.
	\end{equation}
	Notice that  $|\mf_{T_0}|=\binom{n-t-1}{r-t-1}k^{r-t-1}$ and the number of $I\in\ml_{n,t+1,k}\bs\{T_0\}$ with $W\subset I\subset H_1$ is at most $r-t$. 
	Together with (\ref{t+1,1}), (\ref{t+1,2}) and (\ref{t+1,3}), we get the desired bound of $|\mf|$.
\end{proof}

\begin{lem}\label{bound>t+1}
	Let $n,r,k,t,\ell,\mf,\mt$ and $M$ be as in Assumption \ref{ass1} with $|\mt|\ge2$ and $\tau_t(\mt)=t$. 
	\begin{itemize}
		\item[\rm{(\ro1)}] If $\ell=t+2$, then
		$$|\mf|\le2\binom{n-t-1}{r-t-1}k^{r-t-1}+(r-1)(r-t+1)\binom{n-t-2}{r-t-2}k^{r-t-2}.$$
		\item[\rm{(\ro2)}] If $\ell\ge t+3$, then
		$$	|\mf|\le(\ell-t)\binom{n-t-1}{r-t-1}k^{r-t-1}+\left( (r-\ell+1)(r-t+1)+t\right) \binom{n-t-2}{r-t-2}k^{r-t-2}.$$
	\end{itemize}
\end{lem}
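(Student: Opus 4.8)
My plan is to work throughout with the normalized copy $\mg=\sigma(\mf)$ produced just before Lemma~\ref{p1}: it has the same cardinality as $\mf$, its $(t+1)$-element $t$-covers form a set $\mt'$ with $M_\ell=\bigcup_{T\in\mt'}T$, and $M_t$ is a $t$-cover of $\mt'$. Since each $T\in\mt'$ has size $t+1$, contains $M_t$, and is contained in $M_\ell$, we get $\mt'=\{M_t\cup\{(j,1)\}:t+1\le j\le\ell\}$ and $|\mt'|=\ell-t\ge2$. The backbone of the proof is the disjoint decomposition $\mg=\mg^{(1)}\sqcup\mg^{(0)}\sqcup(\mg\bs\mg_{M_t})$, where $\mg^{(1)}=\{G\in\mg_{M_t}:|G\cap M_\ell|\ge t+1\}$ and $\mg^{(0)}=\{G\in\mg_{M_t}:G\cap M_\ell=M_t\}$. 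By maximality each $\mg_{M_t\cup\{(j,1)\}}$ is a full star, so $\mg^{(1)}=\bigcup_{j=t+1}^{\ell}\mg_{M_t\cup\{(j,1)\}}$ gives the leading term $|\mg^{(1)}|\le(\ell-t)\binom{n-t-1}{r-t-1}k^{r-t-1}$. By Lemma~\ref{XJ}~(\ro2), every $G\in\mg\bs\mg_{M_t}$ satisfies $M_\ell\bs M_t\subset G$ and $|G\cap M_t|=t-1$, hence misses exactly one $(i,1)\in M_t$; I split $\mg\bs\mg_{M_t}=\bigsqcup_{i=1}^{t}\ma_i$ accordingly, so that each $G\in\ma_i$ contains the fixed $(\ell-1)$-set $U_i:=M_\ell\bs\{(i,1)\}$.

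A small but pivotal observation is that $\mg^{(0)}=\{G\in\mg:|G\cap M_\ell|=t\}$: writing $W=G\cap M_\ell$ with $|W|=t$ and testing $G$ against each cover $M_t\cup\{(j,1)\}$, the requirement $|G\cap(M_t\cup\{(j,1)\})|\ge t$ for all $j$ forces $W=M_t$ once $\ell\ge t+2$. To bound $\mg^{(0)}$, fix $G_1\in\mg\bs\mg_{M_t}$, which exists because otherwise $M_t$ would be a $t$-cover and $\tau_t(\mg)=t$; set $E_1=G_1\bs M_\ell$, so $|E_1|=r-\ell+1$. For $G\in\mg^{(0)}$ we have $G\cap M_\ell=M_t$ while $G_1\cap M_\ell$ omits a single point of $M_t$, so $|G\cap G_1|\ge t$ forces $G$ to meet $E_1$; thus $\mg^{(0)}\subset\bigcup_{(a,b)\in E_1}\mg_{M_t\cup\{(a,b)\}}$. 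Each $M_t\cup\{(a,b)\}$ has size $t+1$ but is not a $t$-cover (its extra coordinate $a$ exceeds $\ell$, so it lies outside $\mt'$), hence some $G'\in\mg$ meets it in fewer than $t$ points; as any such $G'$ has $M_t\not\subset G'$, Lemma~\ref{XJ}~(\ro2) gives $|G'\cap M_t|=t-1$, so the intersection is exactly $t-1$. Lemma~\ref{FS} with $s=t-1$ then yields $|\mg_{M_t\cup\{(a,b)\}}|\le(r-t+1)\binom{n-t-2}{r-t-2}k^{r-t-2}$, and summing over $E_1$ gives $|\mg^{(0)}|\le(r-\ell+1)(r-t+1)\binom{n-t-2}{r-t-2}k^{r-t-2}$.

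It remains to estimate $\mg\bs\mg_{M_t}$, and here the argument bifurcates on $\ell$. If $\ell\ge t+3$, then $|U_i|=\ell-1\ge t+2$, so the crude star bound $|\ma_i|\le\binom{n-\ell+1}{r-\ell+1}k^{r-\ell+1}$ already suffices; since $\binom{n-s}{r-s}k^{r-s}$ is nonincreasing in $s$, this is at most $\binom{n-t-2}{r-t-2}k^{r-t-2}$, so $|\mg\bs\mg_{M_t}|\le t\binom{n-t-2}{r-t-2}k^{r-t-2}$ and the three estimates add up to the bound in (\ro2). If $\ell=t+2$, then $|U_i|=t+1$ and the crude bound is too weak; I first note $\mg^{(0)}\neq\emptyset$, for otherwise no member meets $M_\ell$ in exactly $t$ points, making each $U_i$ a $t$-cover of size $t+1$ and hence a member of $\mt'$, which is impossible since $M_t\not\subset U_i$. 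Taking a witness $G_0\in\mg^{(0)}$, which satisfies $|G_0\cap U_i|=t-1$, Lemma~\ref{FS} with $s=t-1$ gives $|\ma_i|\le|\mg_{U_i}|\le(r-t+1)\binom{n-t-2}{r-t-2}k^{r-t-2}$, so $|\mg\bs\mg_{M_t}|\le t(r-t+1)\binom{n-t-2}{r-t-2}k^{r-t-2}$; adding this to $|\mg^{(0)}|\le(r-t-1)(r-t+1)\binom{n-t-2}{r-t-2}k^{r-t-2}$ and using $(r-t-1)+t=r-1$ recovers the bound in (\ro1).

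The main obstacle is exactly the estimate for $\mg\bs\mg_{M_t}$ when $\ell=t+2$: there $U_i$ sits at the threshold size $t+1$, the naive star bound returns the too-large quantity $\binom{n-t-1}{r-t-1}k^{r-t-1}$, and one must instead produce a member of $\mg$ meeting $U_i$ in only $t-1$ points so as to invoke Lemma~\ref{FS} with the sharp exponent. This is what forces the identification $\mg^{(0)}=\{G:|G\cap M_\ell|=t\}$ and the structural fact that $\mg^{(0)}$ cannot be empty when $\ell=t+2$; both hinge on the covering behaviour of $\mt'$ together with $\ell\ge t+2$. Once these are in place, checking that the three bounds recombine into the stated closed forms is a routine computation.
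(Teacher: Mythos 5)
Your proof is correct and follows essentially the same route as the paper's: the same decomposition of the family (stars through the $t$-covers $M_t\cup\{(j,1)\}$, members meeting $M_\ell$ exactly in $M_t$ bounded via the $r-\ell+1$ points of a fixed member outside $M_\ell$, and members omitting a point of $M_t$ bounded via the sets $U_i$), with Lemma \ref{XJ} (\ro2) and Lemma \ref{FS} invoked at exactly the same places and yielding the same term-by-term bounds. The only cosmetic difference is that in the case $\ell=t+2$ you produce the witness meeting $U_i$ in $t-1$ points from the non-emptiness of $\mg^{(0)}$, whereas the paper gets it directly from the observation that $U_i$ is not a $t$-cover since it does not contain $M_t$.
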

\begin{proof}
	Suppose that $S$ is a $t$-cover of $\mt$ with size $t$. 
	
	We first prove an upper bound for $|\mf_S|$. 
	Let $F_2\in\mf\bs\mf_S$ and $H_2=S\cup F_2$. 
	It follows from Lemma \ref{XJ} (\ro2) that  $M\subset H_2$ and $|H_2|=r+1$. 
	For each $F_2'\in\mf_S$, if $F_2\cap M=S$, from $|F_2\cap F_2'|\ge t$ we get $|F_2'\cap H_2|\ge t+1$. 
	Write
	$$\ma=\left\{A\in\ml_{n,t+1,k}: S\subset A\subset H_2, A\not\subset M\right\},\quad\mb=\left\{B\in\ml_{n,t+1,k}: S\subset B\subset M\right\}.$$
	Observe that each member of $\mf_S$ contains at least one element of $\ma\cup\mb$.
	For each $A\in\ma$, since $A\not\in\mt$, there exists $F_2''\in\mf$ such that $t-1\le|F_2''\cap S|\le|F_2''\cap A|\le t-1$. 
	Then by Lemma \ref{FS}, we have $|\mf_A|\le(r-t+1)\binom{n-t-2}{r-t-2}k^{r-t-2}$. 
	Notice that $|\ma|\le r-\ell+1$, $|\mb|=\ell-t$ and $|\mf_B|\le\binom{n-t-1}{r-t-1}k^{r-t-1}$ for each $B\in\mb$. 
	Then we obtain
	\begin{equation}\label{l,1}
		|\mf_S|\le(\ell-t)\binom{n-t-1}{r-t-1}k^{r-t-1}+(r-\ell+1)(r-t+1)\binom{n-t-2}{r-t-2}k^{r-t-2}.
	\end{equation}

	Let $\mc=\left\{C\in\ml_{n,\ell-1,k}: S\not\subset C\subset M\right\}$. We have $|\mc|=t$ and $\mf\bs\mf_S\subset\bigcup_{C\in\mc}\mf_C$.
	
	(\ro1) \ Suppose $\ell=t+2$. For each $C\in\mc$, since $C\not\in\mt$, there exists $F_3\in\mf$ such that $|F_3\cap C|\le t-1$. 
	Together with $|F_3\cap M|\ge t$, we have $|F_3\cap C|=t-1$. 
	By Lemmas \ref{XJ} (\ro2), \ref{FS} and $|\mc|=t$, we have
	\begin{equation*}\label{l,3}
		|\mf\bs\mf_S|\le\sum_{C\in\mc}|\mf_C|\le t(r-t+1)\binom{n-t-2}{r-t-2}k^{r-t-2}.
	\end{equation*}
	Together with (\ref{l,1}) we obtain the desired result.

	(\ro2) \ Suppose $\ell\ge t+3$. Observe that $|\mf_C|\le\binom{n-\ell+1}{r-\ell+1}k^{r-\ell+1}$  for each $C\in\mc$. By Lemma \ref{XJ} (\ro2), $\ell\ge t+3$ and $|\mc|=t$, we have
	\begin{equation*}\label{l,2}
		|\mf\bs\mf_S|\le\sum_{C\in\mc}|\mf_C|\le t\binom{n-\ell+1}{r-\ell+1}k^{r-\ell+1}\le t\binom{n-t-2}{r-t-2}k^{r-t-2}.
	\end{equation*}
	Together with (\ref{l,1}) we get the desired bound of $|\mf|$.
\end{proof}

\section{Proof of Theorem \ref{T1}}\label{S3}

Let $n,r,k$ and $t$ be positive integers with  $n\ge t+2$, $n\ge r\ge t+1$ and $k\ge\max\{2,g(n,r,t)\}$. 
Suppose that $\mf$ is a maximal non-trivial $t$-intersecting subfamily of $\ml_{n,r,k}$. 
If $r=t+1$, 
then $\tau_t(\mf)=t+1$ and $\mf$ is the set of its $t$-covers with size $t+1$. 
It follows from Lemmas \ref{XJ} (\ro1) and \ref{p1} (\ro1) that $\mf\cong\mh_1(n,t+1,k,t+2,t)$ and $|\mf|=t+2>1=f(n,t+1,k,t+1,t)$. 
In the following, we may assume that $r\ge t+2$. 
Write
$$\varphi(n,r,k,t)=\dfrac{f(n,r,k,r,t)-|\mf|}{\binom{n-t-2}{r-t-2}k^{r-t-2}}.$$
It is sufficient to show that $\varphi(n,r,k,t)<0$ if one of (\ro1), (\ro2) and (\ro3) in Theorem \ref{T1} holds, and $\varphi(n,r,k,t)>0$ otherwise.

	\medskip
	\noindent{\bf Case 1. $\tau_t(\mf)=t+1$.}
\medskip

In this case, let $\mt$ be the set of all $t$-covers of $\mf$ with size $t+1$ and $\ell=|\bigcup_{T\in\mt} T|$.  Note that $t\le\tau_t(\mt)\le t+1$, and $t+1\le\ell\le\min\{r+1,n\}$ by Lemma \ref{XJ}.

\medskip
\noindent{\bf Case 1.1. $\tau_t(\mt)=t$.} 
\medskip

In this case, (\ro3) does not hold. 

\medskip
\noindent{\bf Case 1.1.1. (\ro1) or (\ro2) holds.}
\medskip

 By Lemmas \ref{XJ} (\ro1), \ref{p1} and \ref{p2}, we have $\ell\ge r$. Let $a$ be an integer with $a\ge t+1$. For each $b\in\{t+1,\dots,a\}$, set 
$$\mn_b(M_a,M_t)=\{F\in\ml_{n,r,k}: M_t\subset F,|F\cap M_a|=b\}.$$
We claim that
\begin{equation}\label{slc}
	f(n,r,k,a,t)=\sum_{i=1}^{a-t}\dfrac{3i-i^2}{2}\cdot|\mn_{t+i}(M_a,M_t)|.
\end{equation}
For each $b\in\{t+1,\dots,a\}$, let $\mm_b(M_a,M_t)$ denote that set of all  $(I,F)\in\ml_{n,b,k}\times\ml_{n,r,k}$ with $M_t\subset I\subset M_a$ and $I\subset F$. 
By double counting $|\mm_{t+1}(M_a,M_t)|$ and $|\mm_{t+2}(M_a,M_t)|$, we obtain
\begin{equation*}\label{slc1}
	\sum_{i=1}^{a-t}i|\mn_{t+i}(M_a,M_t)|=(a-t)\binom{n-t-1}{r-t-1}k^{r-t-1},
\end{equation*}	
\begin{equation*}\label{slc2}
	\sum_{i=2}^{a-t}\binom{i}{2}|\mn_{t+i}(M_a,M_t)|=\binom{a-t}{2}\binom{n-t-2}{r-t-2}k^{r-t-2},
\end{equation*}
which imply that (\ref{slc}) holds. By (\ref{slc}) we have
\begin{equation}\label{>l}
f(n,r,k,\ell,t)\le|\mn_{t+1}(M,S)|+|\mn_{t+2}(M,S)|\le|\mf_S|<|\mf|.	
\end{equation}	
If $r=n$, we obtain $\varphi(n,r,k,t)<0$ from (\ref{>l}). 
If $r<n$, by (\ref{f}), (\ref{g}) and $k\ge g(n,r,t)$, we get
$$\dfrac{f(n,r,k,r+1,t)-f(n,r,k,r,t)}{\binom{n-t-2}{r-t-2}k^{r-t-2}}=\dfrac{(n-t-1)k}{r-t-1}-(r-t)>0.$$
Together with (\ref{>l}), we get $\varphi(n,r,k,t)<0$, as desired.

\medskip
\noindent{\bf Case 1.1.2. Neither (\ro1) nor (\ro2) holds.} 
\medskip

In this case, by Lemmas \ref{XJ} (\ro1), \ref{p1} and \ref{p2}, we have $\ell<r$. 
If $\ell=t+1$, from (\ref{f}), Lemma \ref{boundt+1} and $(n-t-1)k\ge\binom{t+2}{2}(r-t)^2$, we have
\begin{equation*}
	\begin{aligned}
		\varphi(n,r,k,t)&\ge(n-t-1)k-\binom{r-t}{2}-(t+1)(r-t)^2\ge\dfrac{(t^2+t-1)(r-t)^2}{2}>0.
	\end{aligned}
\end{equation*}
	If $\ell=t+2$, we have $r-t\ge3$. From (\ref{f}), (\ref{g}), Lemma \ref{bound>t+1} (\ro1) and $k\ge g(n,r,t)$, we obtain
\begin{equation*}
	\begin{aligned}
		\varphi(n,r,k,t)&\ge\dfrac{(r-t-2)(n-t-1)k}{r-t-1}-\binom{r-t}{2}-(r-1)(r-t+1)\\
		&\ge(r-t-2)(r-t+3)\left(\binom{t+2}{2}-\dfrac{3(r-t)^2+(2t-1)(r-t)+2(t-1)}{2(r-t-2)(r-t+3)}\right)\\ 
		&\ge(r-t-2)(r-t+3)\left( \binom{t+2}{2}-\dfrac{4t+11}{6}\right)\\
		&>0.
	\end{aligned}
\end{equation*}
If  $\ell\ge t+3$, we have $r-t\ge4$. Notice that
\begin{equation}\label{alpha}
\begin{aligned}
g(n,r,t)&\ge\left(\alpha\binom{t+2}{2}+\left(1-\alpha\right)\cdot\dfrac{r-t+1}{2}\right)\cdot\dfrac{(r-t+3)(r-t-1)}{n-t-1}\\
&\ge\left(t+\left(1-\dfrac{1}{3(r-t+3)}\right)\cdot\dfrac{(r-t+1)(r-t+3)}{2}\right)\cdot\dfrac{r-t-1}{n-t-1}\\
&=\left(t+\dfrac{3(r-t)^2+11(r-t)+8}{6}\right)\cdot\dfrac{r-t-1}{n-t-1},
\end{aligned}
\end{equation}
where $\alpha$ is a real number such that $\binom{t+2}{2}(r-t+3)\alpha=t$.
Together with (\ref{f}), (\ref{g}), Lemma \ref{bound>t+1} (\ro2), $k\ge g(n,r,t)$ and $r-\ell\ge1$, we get	
\begin{equation*}
	\begin{aligned}
		\varphi(n,r,k,t)
		&\ge\dfrac{(r-\ell)(n-t-1)k}{r-t-1}-\binom{r-t}{2}-\left(r-\ell+1\right)(r-t+1)-t\\
		&\ge(r-\ell)\left(\dfrac{(n-t-1)k}{r-t-1}-\binom{r-t}{2}-2(r-t+1)-t\right) \\
		&\ge\dfrac{3(r-t)^2+11(r-t)+8}{6}-\binom{r-t}{2}-2(r-t+1)\\
		&>0,
	\end{aligned}
\end{equation*}
as desired.

\medskip
\noindent{\bf Case 1.2. $\tau_t(\mt)=t+1$.} 
\medskip

In this case, by Lemmas \ref{XJ} (\ro1) and \ref{p1} 
(\ro1), we have $\mf\cong\mh_1(n,r,k,t+2,t)$. Then (\ro2) does not hold. Next we show that $\varphi(n,r,k,t)<0$ if (\ro1) or (\ro3) holds and $\varphi(n,r,k,t)>0$ otherwise. 
Observe that
	\begin{equation}\label{Z}
	|\mh_1(n,r,k,t+2,t)|=(t+2)\binom{n-t-1}{r-t-1}k^{r-t-1}-(t+1)\binom{n-t-2}{r-t-2}k^{r-t-2},
\end{equation}
and it follows from (\ref{f}) that
	\begin{equation}\label{Z0}
\varphi(n,r,k,t)=\dfrac{(r-2t-2)(n-t-1)k}{r-t-1}-\binom{r-t}{2}+(t+1).
\end{equation}

Firstly we suppose that (\ro1) or (\ro3) holds. 
Then $r\le 2t+2$. 
If $r=2t+2$, by (\ref{Z0}), we have
$$\varphi(n,r,k,t)=-\binom{t+2}{2}+(t+1)=-\binom{t+1}{2}<0.$$
If $r<2t+2$, by (\ref{g}), (\ref{Z0}) and $k\ge g(n,r,t)$, we get
$$\varphi(n,r,k,t)\le-\dfrac{(n-t-1)k}{r-t-1}-\binom{r-t}{2}+(t+1)\le-\binom{t+2}{2}(r-t+3)+(t+1)<0,$$
as desired.

Now suppose that (\ro3) does not hold. We have $r>2t+2$. From (\ref{g}), (\ref{Z0}) and $k\ge g(n,r,t)$, we obtain
$$		\varphi(n,r,k,t)\ge\dfrac{(n-t-1)k}{r-t-1}-\binom{r-t}{2}+(t+1)\ge\dfrac{(r-t+3)(r-t+1)}{2}-\binom{r-t}{2}>0,$$
as required.

\medskip
\noindent{\bf Case 2. $\tau_t(\mf)\ge t+2$.} 
\medskip

In this case, by Lemmas \ref{XJ} (\ro1), \ref{p1} and \ref{p2}, none of (\ro1), (\ro2) and (\ro3) holds. To prove $\varphi(n,r,k,t)>0$, we firstly prove an upper bound for $|\mf|$.
\medskip
\begin{cl}\label{cl1}
		$|\mf|\le(r-t+1)^2\binom{t+2}{2}\binom{n-t-2}{r-t-2}k^{r-t-2}.$
\end{cl}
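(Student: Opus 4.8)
The plan is to bound $|\mf|$ by covering it with the families $\mf_W$ indexed by the $t$-subsets $W$ of a single minimum $t$-cover, and to control each $|\mf_W|$ by iterating Lemma \ref{FS}. Concretely, let $T^\ast$ be a $t$-cover of $\mf$ of minimum size, so that $|T^\ast|=\tau_t(\mf)=:\tau\ge t+2$ in this case. Since $|T^\ast\cap F|\ge t$ for every $F\in\mf$, each member of $\mf$ contains at least one $t$-subset of $T^\ast$; hence
\begin{equation*}
	\mf=\bigcup_{W\in\binom{T^\ast}{t}}\mf_W,
	\qquad\text{so}\qquad
	|\mf|\le\binom{\tau}{t}\max_{W}|\mf_W|.
\end{equation*}
The factor $\binom{t+2}{t}=\binom{t+2}{2}$ appearing in the target is exactly $\binom{\tau}{t}$ evaluated at $\tau=t+2$, which should be the extremal case.

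Next I would bound $|\mf_W|$ for a fixed $W\in\binom{T^\ast}{t}$ by repeatedly enlarging $W$. The crucial point is that, since $\tau$ is the minimum size of a $t$-cover, every $k$-signed set of size at most $\tau-1$ fails to be a $t$-cover; thus whenever the current set $U$ satisfies $|U|\le\tau-1$ there is an $F\in\mf$ with $|U\cap F|=s\le t-1$, and Lemma \ref{FS} produces a larger set $R$ with $|\mf_U|\le\binom{r-s}{t-s}|\mf_R|$ and $|R|=|U|+t-s$. Starting from $|W|=t$ and iterating until the set reaches size $\tau$, the accumulated coefficient is a product of binomials $\binom{r-s_i}{t-s_i}$, each at most $r-t+1$ (with equality precisely when $s_i=t-1$), and the final factor is the trivial bound $|\mf_R|\le\binom{n-\tau}{r-\tau}k^{r-\tau}$. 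In the profile where every step grows the set by exactly one (all $s_i=t-1$, so $\tau-t$ steps) this yields
\begin{equation*}
	|\mf_W|\le(r-t+1)^{\tau-t}\binom{n-\tau}{r-\tau}k^{r-\tau}.
\end{equation*}
A step with $s_i<t-1$ enlarges the set faster, replacing a factor $r-t+1$ by the larger $\binom{r-s_i}{t-s_i}$, but it also lowers the eventual trivial bound by an extra power of $k$; because $k\ge g(n,r,t)$ is large, the $k$-saving dominates, so such profiles only decrease the bound.

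Combining the two steps gives
\begin{equation*}
	|\mf|\le\binom{\tau}{t}(r-t+1)^{\tau-t}\binom{n-\tau}{r-\tau}k^{r-\tau}=:\Phi(\tau),
\end{equation*}
and it remains to check that $\Phi(\tau)\le\Phi(t+2)$ for all $\tau\ge t+2$, where $\Phi(t+2)$ is exactly the asserted bound $(r-t+1)^2\binom{t+2}{2}\binom{n-t-2}{r-t-2}k^{r-t-2}$. The ratio $\Phi(\tau+1)/\Phi(\tau)$ simplifies to $\tfrac{\tau+1}{\tau+1-t}\,(r-t+1)\,\tfrac{r-\tau}{(n-\tau)k}$, which is less than $1$ once $k\ge g(n,r,t)$; hence $\Phi$ is decreasing on $\{\tau\ge t+2\}$ and $\Phi(\tau)\le\Phi(t+2)$, as required. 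Note that this route bypasses the structure Lemmas \ref{XJ}--\ref{p2}, which are tailored to $\tau_t(\mf)=t+1$.

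The main obstacle is the second step: making the iterated application of Lemma \ref{FS} uniform over all ``intersection profiles'' $(s_1,s_2,\dots)$ and all $\tau\ge t+2$, i.e.\ proving rigorously that the extremal profile is the one with every $s_i=t-1$ and $\tau=t+2$. This is precisely where the hypothesis $k\ge g(n,r,t)$ enters, guaranteeing that each saved power of $k$ outweighs both the growth of the binomial coefficients $\binom{r-s_i}{t-s_i}$ and the growth of $\binom{\tau}{t}$; the verification is a somewhat delicate but essentially routine monotonicity computation.
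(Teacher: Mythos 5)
Your proposal follows the paper's own proof almost step for step: the same covering $\mf=\bigcup_{W\in\binom{Z}{t}}\mf_W$ over $t$-subsets of a minimum $t$-cover $Z$, the same iteration of Lemma \ref{FS} driven by the fact that sets of size less than $z:=\tau_t(\mf)$ cannot be $t$-covers, and the same final monotonicity argument (your $\Phi$ is exactly the paper's $\psi$, with the same ratio computation) to reduce from general $z$ to $z=t+2$. Two points about the step you flag as the ``main obstacle''. First, your parenthetical claim that each factor $\binom{r-s_i}{t-s_i}$ is ``at most $r-t+1$'' is false when $s_i<t-1$: already for $s_i=t-2$ one gets $\binom{r-t+2}{2}>r-t+1$. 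The statement that is true, and that the paper uses to make all intersection profiles uniform, is the per-unit-of-growth bound $\binom{r-s}{t-s}\le(r-t+1)^{t-s}$; with it, any chain of applications of Lemma \ref{FS} ending at a set $Y_w$ with $|Y_w|\ge z$ gives $|\mf_W|\le(r-t+1)^{|Y_w|-t}\binom{n-|Y_w|}{r-|Y_w|}k^{r-|Y_w|}$, regardless of the profile. Second, the place where the ``$k$-saving'' argument is genuinely needed is not to compare profiles of the same length but to handle the overshoot: the last step may jump by $t-s>1$, so $|Y_w|$ can exceed $z$, and one compares the resulting bound with the target at exactly $z$ via
$\prod_{i=z}^{|Y_w|-1}\frac{(r-t+1)(r-i)}{(n-i)k}\le\bigl(\tfrac{2}{r-t+3}\bigr)^{|Y_w|-z}\le1$,
which holds by $k\ge g(n,r,t)$. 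With these two observations your outline closes exactly as you predicted; the gap you left is real but is settled by a two-line computation, and the route is the same as the paper's.
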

\begin{proof}[Proof of Claim \ref{cl1}]
	Suppose $\tau_t(\mf)=z$ and $Z$ is a $t$-cover of $\mf$ with size $z$. 
For $Y_0\in\binom{Z}{t}$, w.l.o.g., assume that $\mf_{Y_0}\neq\emptyset$. 
Since $Y_0$ is not a $t$-cover of $\mf$, there exists $X_0\in\mf$ such that $|X_0\cap Y_0|<t$. 
By Lemma \ref{FS}, there exists $Y_1\in\ml_{n,2t-|Y_0\cap X_0|,k}$ containing $Y_0$ such that
$$|\mf_{Y_0}|\le\binom{r-|X_0\cap Y_0|}{t-|X_0\cap Y_0|}|\mf_{Y_1}|\le(r-t+1)^{t-|X_0\cap Y_0|}|\mf_{Y_1}|.$$
Note that $\mf_{Y_1}\neq\emptyset$ by $|\mf_{Y_0}|>0$. 
Similarly, we deduce that there exist $k$-signed sets $Y_0,Y_1,\dots,Y_w$ on $[n]$ such that $Y_0\subset\cdots\subset Y_w$ with $|Y_{w-1}|<z$, $|Y_w|\ge z$ and $$|\mf_{Y_i}|\le(r-t+1)^{|Y_{i+1}|-|Y_i|}|\mf_{Y_{i+1}}|$$
for each $i=0,\dots,w-1$. Therefore $$|\mf_{Y_0}|\le(r-t+1)^{|Y_w|-t}|\mf_{Y_w}|\le(r-t+1)^{|Y_w|-t}\binom{n-|Y_w|}{r-|Y_w|}k^{r-|Y_w|}.$$  
Together with $k\ge g(n,r,t)$,  we obtain
$$\dfrac{|\mf_{Y_0}|}{(r-t+1)^{z-t}\binom{n-z}{r-z}k^{r-z}}\le\prod_{i=z}^{|Y_w|-1}\dfrac{(r-t+1)(r-i)}{(n-i)k}\le\left(\dfrac{2}{r-t+3}\right)^{|Y_w|-z}\le1.$$
Notice that $\mf=\bigcup_{Y\in\binom{Z}{t}}\mf_Y$. 
Then
$$|\mf|\le(r-t+1)^{z-t}\binom{z}{t}\binom{n-z}{r-z}k^{r-z}.$$
For each $y\in\{t+2,\dots,r\}$, write $$\psi(y)=(r-t+1)^{y-t}\binom{y}{t}\binom{n-y}{r-y}k^{r-y}.$$ 
If $y\le r-1$, by $y\ge t+2$, $k\ge g(n,r,t)$ and (\ref{g}), we have
$$\dfrac{\psi(y+1)}{\psi(y)}
=\dfrac{y+1}{y+1-t}\cdot\dfrac{(r-t+1)(r-y)}{(n-y)k}\le\dfrac{t+3}{3}\cdot\dfrac{2(r-t+1)}{(t+1)(t+2)(r-t+3)}\le1.$$
Then from $z\ge t+2$, we get $|\mf|\le\psi(t+2)$, as desired.
\end{proof}
\medskip

Observe that 
\begin{equation*}
\begin{aligned}
g(n,r,t)&\ge\left((1-\beta)\binom{t+2}{2}+\beta\cdot\dfrac{r-t+1}{2}\right)\cdot\dfrac{(r-t+3)(r-t-1)}{n-t-1}\\
&=\left(\dfrac{(r-t)^2+3(r-t)+4}{r-t+1}\binom{t+2}{2}+\dfrac{1}{r-t}\binom{r-t}{2}\right)\cdot\dfrac{r-t-1}{n-t-1},
\end{aligned}
\end{equation*}
where $\beta$ is a real number such that $(r-t+3)(r-t+1)\beta=r-t-1$.
Together with (\ref{f}), (\ref{g}),  $r\ge t+2$, $k\ge g(n,r,t)$ and Claim \ref{cl1}, 
we have
\begin{equation*}
	\begin{aligned}
		\varphi(n,r,k,t)
		&\ge\dfrac{(r-t)(n-t-1)k}{r-t-1}-\binom{r-t}{2}-\binom{t+2}{2}(r-t+1)^2\\
		&\ge\binom{t+2}{2}\left(\dfrac{(r-t)^3+3(r-t)^2+4(r-t)}{r-t+1}-(r-t+1)^2\right)\\
		&=\dfrac{r-t-1}{r-t+1}\binom{t+2}{2}\\
		&>0.
	\end{aligned}
\end{equation*}	
This finishes the proof of Theorem \ref{T1}.\qed

\section{Proof of Theorem \ref{T3}}\label{S4}

Let $n,r,k$ and $t$ be positive integers with $n\ge t+2\ge 4$, $n\ge r\ge t+1$  and $k\ge\max\{2,g(n,r,t)\}$. 
Suppose that $\mf$ is a maximum sized non-trivial $t$-intersecting subfamily of $\ml_{n,r,k}$. If $r=t+1$, 
by Theorem \ref{T1}, we have $\mf\cong\mh_1(n,r,k,t+2,t)$. 
In the following, we assume that $r\ge t+2$. Write $p=\min\{r+1,n\}$. 
\medskip
\begin{cl}\label{cl2}
	$\mf$ is isomorphic to $\mh_1(n,r,k,p,t)$ or $\mh_1(n,r,k,t+2,t)$.
\end{cl}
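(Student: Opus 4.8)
The plan is to show that any maximum non-trivial $t$-intersecting family $\mf$ (with $r \ge t+2$) must be isomorphic to one of the two families $\mh_1(n,r,k,p,t)$ or $\mh_1(n,r,k,t+2,t)$. Since $\mf$ is maximum, it is in particular maximal and non-trivial, so $\tau_t(\mf) \ge t+1$. The natural strategy is to split according to the value of $\tau_t(\mf)$, exactly as in the proof of Theorem \ref{T1}, and eliminate every possibility except the two claimed families by comparing sizes.

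First I would observe that $\mf \ge |\mh_1(n,r,k,p,t)|$ by maximality, since $\mh_1(n,r,k,p,t)$ is itself a non-trivial $t$-intersecting family and $\mf$ is largest. This gives a lower bound on $|\mf|$ that rules out the small cases. Specifically, the computation underlying the proof of Theorem \ref{T1} shows that $|\mh_1(n,r,k,p,t)|$ is at least $f(n,r,k,r,t)$ (indeed, taking $m = p = \min\{r+1,n\}$ in case (\ro1) of Theorem \ref{T1}), so $|\mf| \ge f(n,r,k,r,t)$. Theorem \ref{T1} then applies directly and tells me that $\mf$ must be isomorphic to one of the families listed in its parts (\ro1), (\ro2), (\ro3): namely $\mh_1(n,r,k,m,t)$ for $m \in \{r, p\}$, or $\mh_2(n,r,k,c,t)$ for some admissible $c$, or $\mh_1(n,r,k,t+2,t)$.

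The remaining work is a finite comparison: among these candidate families, I must determine which one actually has the largest size, and show it is $\mh_1(n,r,k,p,t)$ or $\mh_1(n,r,k,t+2,t)$. For this I would compare $|\mh_1(n,r,k,r,t)|$ with $|\mh_1(n,r,k,p,t)|$ (when $p = r+1$, the monotonicity estimate $\frac{(n-t-1)k}{r-t-1} - (r-t) > 0$ already established in Case 1.1.1 of the Theorem \ref{T1} proof shows $f$ and hence the $\mh_1$ sizes strictly increase with the last parameter, so the $m=p$ family beats the $m=r$ family), and compare $|\mh_2(n,r,k,c,t)|$ against the relevant $\mh_1$. The $\mh_2$ family must be excluded by a direct size estimate showing it is strictly smaller than the maximizing $\mh_1$; I expect the key inequalities here to be versions of the $\varphi(n,r,k,t)$ sign computations already carried out, now applied to compare two concrete constructions rather than to bound an arbitrary maximal family.

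The main obstacle will be pinning down \emph{which} of the two surviving $\mh_1$ families is larger, since this is exactly the dichotomy $p \le 2t+2$ versus $p > 2t+2$ that reappears in the final statement of Theorem \ref{T3}; the sign of the relevant size difference is governed by the quantity in (\ref{Z0}), whose sign flips precisely at $r = 2t+2$. I would therefore keep the comparison symbolic enough to feed cleanly into the subsequent case analysis of Theorem \ref{T3}, relegating to that later argument the determination of the exact maximizer. For the present claim it suffices to establish that no family other than these two can be maximum, which follows once the $\mh_2$ candidate and the $m=r$ variant of $\mh_1$ are both shown to be non-maximal by the size comparisons above.
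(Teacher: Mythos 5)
Your reduction via Theorem \ref{T1} is exactly the paper's first step: since $\mf$ is maximum, $|\mf|\ge|\mh_1(n,r,k,p,t)|\ge f(n,r,k,p,t)\ge f(n,r,k,r,t)$ (the middle inequality follows from (\ref{slc}), which exhibits $f(n,r,k,\ell,t)$ as a \emph{lower} bound for $|\mh_1(n,r,k,\ell,t)|$), so Theorem \ref{T1} leaves only $\mh_1(n,r,k,r,t)$ (with $r\neq p$, $r\neq t+2$) and $\mh_2(n,r,k,c,t)$ to eliminate. The gap is in how you eliminate them. Your argument against $\mh_1(n,r,k,r,t)$ --- that the monotonicity $\frac{(n-t-1)k}{r-t-1}-(r-t)>0$ of $f$ in its fourth argument implies ``the $\mh_1$ sizes strictly increase with the last parameter'' --- is a non sequitur: $f$ is only a lower bound for the corresponding $\mh_1$, and monotonicity of a lower bound says nothing about monotonicity of the actual sizes. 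Indeed the asserted strict increase can fail: for $t=1$, $r=t+2=3$, $n=5$ a direct count gives $|\mh_1(5,3,k,3,1)|=|\mh_1(5,3,k,4,1)|=9k-2$ for every $k$. Under the hypothesis $t\ge2$ of Theorem \ref{T3} the strict inequality you want is true, but it is precisely the thing that must be proved, and no proof is offered.

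What is needed --- and what the paper does --- is an \emph{upper} bound on $|\mf|$ in the surviving cases. Both survivors have $\tau_t(\mf)=t+1$, $\tau_t(\mt)=t$, $|\mt|\ge2$ and $\ell=r\neq p$ (so $p=r+1$), hence Lemma \ref{bound>t+1} applies with $\ell=r$; comparing its bound against $f(n,r,k,p,t)$, using $k\ge g(n,r,t)$ and (\ref{alpha}), yields $|\mf|<f(n,r,k,p,t)\le|\mh_1(n,r,k,p,t)|$, contradicting maximality, and this one computation disposes of $\mh_1(n,r,k,r,t)$ and every $\mh_2(n,r,k,c,t)$ simultaneously. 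These are genuinely new inequalities, not ``versions of the $\varphi(n,r,k,t)$ sign computations'' from Theorem \ref{T1}: there the benchmark is $f(n,r,k,r,t)$, and for families with $\ell=r$ the sign goes the \emph{wrong} way (Case 1.1.1 shows $\varphi<0$, i.e.\ such families are large relative to that benchmark), so none of those computations can be recycled to exclude the survivors. Your treatment of $\mh_2$ is likewise left entirely unspecified. As written, the proposal is correct in its skeleton but incomplete at its decisive step; a minor additional point is that quoting Theorem \ref{T1} to get $|\mh_1(n,r,k,p,t)|\ge f(n,r,k,r,t)$ presupposes that $\mh_1(n,r,k,p,t)$ is a \emph{maximal} non-trivial $t$-intersecting family, which you should either verify or bypass via (\ref{slc}) as above.
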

\begin{proof}[Proof of Claim \ref{cl2}]
	
	Suppose for contradiction that neither $\mh_1(n,r,k,p,t)$ nor $\mh_1(n,r,k,t+2,t)$ is isomorphic to $\mf$.
Let $\mt$ be the set of all $t$-covers of $\mf$ with size $\tau_t(\mf)$ and $\ell=|\bigcup_{T\in\mt} T|$. By Theorem \ref{T1} and Lemmas \ref{XJ} (\ro1), \ref{p1}, \ref{p2}, we have $\tau_t(\mf)=t+1$, $\tau_t(\mt)=t$ and $\ell=r\neq p$. Therefore $n>r$, $p=r+1$ and $|\mt|\ge2$.

If $r=t+2$, by (\ref{f}), (\ref{g}), $k\ge g(n,r,t)$ and Lemma \ref{bound>t+1} (\ro1), we get
$$\dfrac{f(n,r,k,p,t)-|\mf|}{\binom{n-t-2}{r-t-2}k^{r-t-2}}\ge\dfrac{(n-t-1)k}{r-t-1}-\binom{r-t+1}{2}-3(r-1)\ge5\binom{t+2}{2}-3(t+2)>0.$$
If $r\ge t+3$, by (\ref{f}), (\ref{g}), (\ref{alpha}), $k\ge g(n,r,t)$ and Lemma \ref{bound>t+1} (\ro2), we  have
\begin{equation*}
	\begin{aligned}
		\dfrac{f(n,r,k,p,t)-|\mf|}{\binom{n-t-2}{r-t-2}k^{r-t-2}}
		&\ge\dfrac{(n-t-1)k}{r-t-1}-\binom{r-t+1}{2}-(r-t+1)-t\\
		&\ge\dfrac{3(r-t)^2+11(r-t)+8}{6}-\binom{r-t+1}{2}-(r-t+1)\\
		&>0.
	\end{aligned}
\end{equation*}
Together with (\ref{>l}), we get $|\mf|<f(n,r,k,p,t)\le|\mh_1(n,r,k,p,t)|$, a contradiction to the assumption that $\mf$ is maximum sized. 
\end{proof}
\medskip

If $n=t+2$, it follows from Claim \ref{cl2} that $\mf\cong\mh_1(n,r,k,t+2,t)$. In the following we may assume that $n\ge t+3$. 
Write $$\mu(n,r,k,t)=\dfrac{|\mh_1(n,r,k,t+2,t)|-|\mh_1(n,r,k,p,t)|}{\binom{n-t-2}{r-t-2}k^{r-t-2}}.$$ 
By Claim \ref{cl2}, it suffices to show that $\mu(n,r,k,t)<0$ if $p>2t+2$, and $\mu(n,r,k,t)>0$ if $p\le2t+2$. 
We divide the remaining proof into three cases.

\medskip
\noindent{\bf Case 1. $p>2t+2$.}
\medskip

Since $k\ge g(n,r,t)$ and $|\mh_1(n,r,k,p,t)|>f(n,r,k,p,t)$, by (\ref{f}), (\ref{g}) and (\ref{Z}), we have 
\begin{equation*}
	\begin{aligned}
		\mu(n,r,k,t)<-\dfrac{(n-t-1)k}{r-t-1}+\binom{p-t}{2}-(t+1)
		\le-\dfrac{3(r-t+1)}{2}-(t+1)
		<0,
			\end{aligned}
	\end{equation*}
as desired.

\medskip
\noindent{\bf Case 2. $p<2t+2$.}
\medskip

By the construction of $\mh_1(n,r,k,p,t)$, it is routine to verify that
\begin{equation*}
	|\mh_1(n,r,k,p,t)|\le(p-t)\binom{n-t-1}{r-t-1}k^{r-t-1}+t(k-1).
\end{equation*}
Then if $r\ge t+3$, by (\ref{g}), (\ref{Z}), $t\ge2$ and $k\ge g(n,r,t)$, we have
\begin{equation*}
	\begin{aligned}
		\mu(n,r,k,t)&\ge\dfrac{(n-t-1)k}{r-t-1}-(t+1)-t\ge\binom{t+2}{2}(r-t+3)-(2t+1)>0.
	\end{aligned}
\end{equation*}
If $r=t+2$, then $p=t+3$ by $n\ge t+3$, and
$$|\mh_1(n,t+2,k,t+3,t)|=3(n-t-1)k+t-3.$$
Together with (\ref{Z}), $n\ge t+3$ and $t,k\ge2$, we obtain
$$\mu(n,t+2,k,t)=(t-1)((n-t-1)k-2)>0,$$
as required.

\medskip
\noindent{\bf Case 3. $p=2t+2$.}
\medskip

In this case, we have $r\ge p-1>t+2$. 
By the construction of $\mh_1(n,r,k,p,t)$, we have
$$|\mh_1(n,r,k,p,t)|\le\sum_{i=1}^{p-t}|\mn_{t+i}(M_p,M_t)|+t(k-1).$$
Together with (\ref{slc}) and $|\mn_{t+i}(M_p,M_t)|\le\binom{t+2}{i}\binom{n-t-i}{r-t-i}k^{r-t-i}$ for each $i\in\{3,\dots,p-t\}$, we get
\begin{equation*}\label{h1-f}
\begin{aligned}
|\mh_1(n,r,k,p,t)|-f(n,r,k,p,t)&\le\sum_{i=3}^{p-t}\binom{i-1}{2}|\mn_{t+i}(M_p,M_t)|+t(k-1)\\
&\le\sum_{i=3}^{p-t}\binom{i-1}{2}\binom{t+2}{i}\binom{n-t-i}{r-t-i}k^{r-t-i}+t(k-1).
\end{aligned}	
\end{equation*}
For each $i\in\{3,\dots,p-t\}$, write
$$\lambda(i)=\binom{i-1}{2}\binom{t+2}{i}\binom{n-t-i}{r-t-i}k^{r-t-i}.$$
If $i\le p-t-1$, by (\ref{g}), $t\ge2$, $i\ge3$ and $k\ge g(n,r,t)$, we have 
\begin{equation*}
\begin{aligned}
\dfrac{\lambda(i+1)}{\lambda(i)}&=\dfrac{i(t+2-i)}{(i-2)(i+1)}\cdot\dfrac{r-t-i}{(n-t-i)k}\le\dfrac{3(t-1)}{4(t+1)(t+2)}\le\dfrac{1}{4}.
\end{aligned}
\end{equation*}
Then
\begin{equation*}
\begin{aligned}
|\mh_1(n,r,k,p,t)|-f(n,r,k,p,t)&\le\lambda(3)\cdot\sum_{j=0}^\infty\dfrac{1}{4^j}+t(k-1)\\
&=\dfrac{4}{3}\binom{t+2}{3}\binom{n-t-3}{r-t-3}k^{r-t-3}+t(k-1).
\end{aligned}
\end{equation*}
Together with (\ref{g}), $t\ge2$, $k\ge g(n,r,t)$ and
$$|\mh_1(n,r,k,t+2,t)|-f(n,r,k,p,t)=\binom{t+1}{2}\binom{n-t-2}{r-t-2}k^{r-t-2},$$ 
we get
\begin{equation*}\label{-1}
	\begin{aligned}
		\mu(n,r,k,t)&\ge\binom{t+1}{2}-t-\dfrac{4(r-t-2)}{3(n-t-2)k}\binom{t+2}{3}\\
		&\ge\binom{t}{2}-\dfrac{8}{3(t+1)(t+2)(r-t+3)}\cdot\dfrac{(t+2)(t+1)t}{6}\\
		&\ge\left(\dfrac{t-1}{2}-\dfrac{4}{9}\right)t\\
		&>0. 
	\end{aligned}
\end{equation*}
This finishes the proof of Theorem \ref{T3}.\qed

	\medskip
\noindent\textsc{Acknowledgement.} B. Lv is supported by NSFC (12071039, 11671043) and NSF of Hebei Province(A2019205092), K. Wang is supported by the National Key R\&D Program of China (No. 2020YFA0712900) and NSFC (12071039, 11671043).

\end{document}